\newtheorem{thm}{Theorem}[section]
\newtheorem{cor}[thm]{Corollary}
\newtheorem{lem}[thm]{Lemma}
\newtheorem{prop}[thm]{Proposition}
\theoremstyle{definition}
\newtheorem{rem}[thm]{Remark}
\newcommand{\kkk}{\mathbf k}
\newcommand{\ccc}{\mathbf c}
\newcommand{\MMM}{\mathbf M}
\newcommand{\zC}{\mathbb C}
\newcommand{\zQ}{\mathbb Q}
\newcommand{\zR}{\mathbb R}
\newcommand{\zN}{\mathbb N}
\newcommand{\zK}{\mathbb K}
\begin{document}
\baselineskip=.75cm

\title{Non-linear Plank Problems and polynomial inequalities}

\author[D. Carando]{Daniel Carando}
\address{Departamento de Matem\'{a}tica - Pab I,
Facultad de Cs. Exactas y Natu\-ra\-les, Universidad de Buenos Aires,
(1428) Buenos Aires, Argentina and IMAS-CONICET}
\email{dcarando@dm.uba.ar}

\author[D. Pinasco]{Dami\'an Pinasco}
\address{Departamento de Matem\'{a}ticas y Estad\'{i}stica, Universidad Torcuato Di Tella, Av. Figueroa Alcorta 7350, (1428) Buenos Aires, Argentina and CONICET}
\email{dpinasco@utdt.edu}

\author[J. T. Rodr\'{i}guez]{Jorge Tom\'as Rodr\'{i}guez}
\address{Departamento de Matem\'{a}tica, Facultad de Cs. Exactas, Universidad Nacional del Centro de la Provincia de Buenos Aires, (7000) Tandil, Argentina and NUCOMPA-UNICEN}
\email{jtrodrig@dm.uba.ar}

\begin{abstract}
We study lower bounds for the norm of the product of polynomials and their applications to the so called \emph{plank problem.} We are particularly interested in polynomials on finite dimensional Banach spaces, in which case our results improve previous works when the number of polynomials is large.
\end{abstract}

\maketitle


\section*{Introduction}

The problem of finding lower bounds for the product of polynomials has been studied in several situations, considering a wide variety underlying spaces and norms. On a Banach space $X$, our study focuses on finding the best constant $M$ such that, for any set of continuous scalar polynomials $P_1, \ldots, P_n$ over $X$, of some prescribed degrees, the following inequality holds
\begin{equation}\label{problema}
 \Vert P_1 \Vert \cdots \Vert P_n \Vert \leq M \Vert P_1 \cdots P_n\Vert .
\end{equation}
In \cite{BST}, C.~Ben\'{i}tez, Y.~Sarantopoulos and A.~Tonge proved that, for continuous polynomials of degrees $k_1,\ldots,k_n$, inequality (\ref{problema}) holds for every complex Banach space with constant
\begin{equation}\label{eq-BST}
M=\frac{(k_1+\cdots + k_n)^{k_1+\cdots +k_n}}{k_1^{k_1} \cdots k_n^{k_n}}.
\end{equation}
The authors also showed that this is the best universal constant, since there are polynomials on $\ell_1$ for which we have equality.
For complex Hilbert spaces and homogeneous polynomials, the second named author proved in \cite{P} that the optimal constant is
\begin{equation}\label{eq-P}
M=\sqrt{\frac{(k_1+\cdots + k_n)^{k_1+\cdots +k_n}}{k_1^{k_1} \cdots k_n^{k_n}}},
\end{equation}
when the dimension of the space is at least the number of polynomials. Using a complexification argument it is easy to find a constant for real Hilbert space from \eqref{eq-P}. But, in this context, a stronger result was given in \cite{MNPP} by D.~Malicet, I.~Nourdin, G.~Peccati and G.~Poly: for real Hilbert spaces and homogeneous polynomials, (\ref{problema})~holds with constant
\begin{equation}\label{eq-MNPP}
M=\sqrt{\frac{2^{k_1+\cdots +k_n}\Gamma\left(k_1+\cdots +k_n +\frac{d}{2}\right)}{\Gamma\left(\frac{d}{2}\right)k_1! \cdots k_n!}}.
\end{equation}
For homogeneous polynomials on $L_p$ spaces or on the Schatten classes $\mathcal S_p$, with $1\leq p \leq 2$, in \cite{CPR} we showed that the optimal constant is
\begin{equation}\label{eq-CPR}
M=\sqrt[p]{\frac{(k_1+\cdots + k_n)^{k_1+\cdots +k_n}}{k_1^{k_1} \cdots k_n^{k_n}}}.
\end{equation}

Some further references on this and related problems, where other polynomial norms and different Banach spaces are considered, are \cite{A, BBEM, BR,  RS, Ro}.

In this work we aim to find better constants for finite dimensional spaces. We are able to improve some of the previous results when the number of polynomials is much larger than the dimension of the space (see Theorem~\ref{prop-cualquierespacio} and the comments following it). We also obtain specific bounds for Hilbert spaces.

\bigskip

As an application of the different versions of \eqref{problema}, we address a polynomial version of the  \emph{plank problem}. The following problem was posed  by Alfred Tarski  in the early 1930's \cite{Tar1, Tar2}.
\begin{quote} Given a convex body $K \subset \zR^d$, of minimal width 1, when $K$ is covered by $n$ parallel strips or planks with  widths $a_1, \ldots, a_n,$ is it true that $\sum_{i=1}^n a_i \ge 1$?
\end{quote}
The solution to this problem was given by T.~Bang \cite{Ban}, who also presented the following related question.
\begin{quote} When a convex body is covered by planks, is it true that the sum of the relative widths is greater than or equal to $1$?
\end{quote}
This question remains unanswered in the general case, but for centrally symmetric convex bodies the solution was given by K.~Ball in \cite{Ba1}, where he proved (slightly more than) the following.
\begin{quote} If $(\phi_j)_{j\in \zN}$ is a sequence of norm 1 linear functionals on a (real) Banach space $X$ and $(a_j)_{j \in \zN}$ is a sequence of non-negative numbers whose sum is less than 1, then there is a point $\textbf{z}$ in the unit ball of  $X$ for which $\vert \phi_j(\textbf{z})\vert \geq a_j$ for every $j \in \zN.$
\end{quote}
To realize that this is a sharp result, it is enough to consider $X=\ell_1$ and the vectors of the standard basis of its dual, $\ell_\infty$. However, when we restrict ourselves to some special Banach spaces and functionals, better constraints can be found. For example, given $\{\phi_1, \ldots, \phi_n\}$ a set of orthonormal linear functionals defined on a Hilbert space $\mathcal H,$ it is clear that for any set of real numbers $\{a_1, \ldots, a_n\}$ such that $\sum_{j=1}^n a_j^2 \le 1,$ it is possible to find a vector $\textbf{z}$ in the unit ball of  $\mathcal H$ satisfying $\vert \phi_j(\textbf{z})\vert \ge a_j$ for $j=1, \ldots, n.$ This is not  necessary true if we choose other sets of unit functionals on a real Hilbert space. For complex Hilbert spaces the situation is better, as K.~Ball showed in \cite{Ba2}:
\begin{quote}
If $(\phi_j)_{j\in \zN}$ is a sequence of norm 1 linear functionals on a complex Hilbert space $\mathcal H$ and $(a_j)_{j \in \zN}$ is a sequence of non-negative numbers satisfying $\sum_{j=1}^\infty a_j^2=1,$ then there is a unit vector $\textbf{z} \in \mathcal H$ for which $\vert \phi_j(\textbf{z})\vert \geq a_j$ for every $j \in \zN.$
\end{quote}
This result implies the following inequality: let $S_\mathcal H$ denote the unit sphere of $\mathcal H,$ then for any set of vectors $\{\omega_1, \ldots, \omega_n\} \subset S_\mathcal H$ we have
\[
\sup_{\textbf{z} \in S_\mathcal H} \vert \langle \textbf{z}, \omega_1 \rangle \cdots \langle \textbf{z}, \omega_n \rangle \vert \ge \dfrac{1}{\sqrt{n^n}}.
\]
The last inequality was proved by J.~Arias-de-Reyna \cite{A} a few years before Ball's article  using a different technique. It is related to the lower bounds for the norm of the product of polynomials mentioned above.

Using results from \cite{BST, P}, A.~Kavadjiklis and S.~G.~Kim \cite{KK} studied a plank type problem for polynomials on Banach spaces. In this article we exploit the lower bounds for the product of polynomials given in \cite{BST, CPR, P}, as well as the lower bounds we study in Section~\ref{results}, to address this kind of problems.

By a polynomial plank problem we mean to give  conditions such that, for any set of positive real numbers $a_1, \ldots, a_n,$ fulfilling them, and any set of continuous scalar polynomials $P_1,\ldots,P_n$ over a Banach space $X$, of degrees $k_1,\ldots,k_n$, there is a vector $\textbf{z}$ in the unit ball of  $X$ for which $|P_j(\textbf{z})| \ge a_j^{k_j}$ for $j=1,\ldots,n.$

\subsection*{Organization} This article is organized as follows: our lower bound for the product of polynomials, Theorem~\ref{prop-cualquierespacio} and  Propositions~\ref{prop-lowerboundhilbert} and \ref{prop-dim2}; and our plank type results, Theorems~\ref{main thm} and \ref{main thm2}, and Proposition~\ref{main thm3}, will be stated in Section~\ref{results}. In this section we also analyse these results and compare them with some previous work. The proof of Theorem~\ref{prop-cualquierespacio} and Propositions~\ref{prop-lowerboundhilbert} and \ref{prop-dim2} are contained in Section~\ref{proof lower bound} while the proof of Theorems~\ref{main thm} and \ref{main thm2}, and Proposition~\ref{main thm3} will be given in Section~\ref{proof plank}.

\section{Main results}\label{results}

We begin this section with some notation. Given a Banach space $X$, $B_X$ denotes the unit ball of $X$.  Recall that a function $P:X\rightarrow \zK$ is a continuous $k-$homogeneous polynomial if there is a continuous $k-$linear function $T:X\times\cdots\times X\rightarrow \zK$ for which $P(\textbf{z})=T(\textbf{z},\ldots,\textbf{z})$. A function $Q:X\rightarrow \zK$ is a continuous polynomial of degree $k$ if $Q=\sum_{l=0}^k Q_l$, with $Q_0$ a constant, $Q_l$  an $l-$homogeneous polynomial ($1\leq l \leq k$) and $Q_k \neq 0$. The norm of a polynomial $Q$ is defined as $$\Vert Q\Vert = \sup_{\textbf{z}\in B_X} \vert Q(\textbf{z})\vert.$$

\subsection{Lower bounds for the product of polynomials}

Our main result concerning lower bounds for the product of polynomials on finite dimensional spaces is the following.

\begin{thm}\label{prop-cualquierespacio} Let $X$ be a $d-$dimensional Banach space over  $\zK=\mathbb R$ or $\mathbb C$ and $P_1,\ldots,P_n$ scalar polynomials of degrees $k_1,\ldots,k_n$ over $X$. Then
\begin{equation}\label{eq-cualquierespacio}
 \Vert P_1 \Vert \cdots \Vert P_n \Vert \leq \frac{(C_{\zK}4ed)^{\sum_{i=1}^n k_i}}{2^{\frac{n}{C_{\zK}}}} \Vert P_1 \cdots P_n\Vert ,
\end{equation}
where $C_{\zR}=1$ and $C_{\zC}=2$.
\end{thm}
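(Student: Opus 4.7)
The plan is to prove Theorem~\ref{prop-cualquierespacio} via a Jensen / AM--GM argument, reducing the product inequality to a single-polynomial logarithmic estimate. First, by a standard dehomogenization (adding an auxiliary scalar variable and passing to a $(d+1)$-dimensional ambient), one may assume that each $P_i$ is $k_i$-homogeneous; the shift $d\mapsto d+1$ only affects multiplicative constants and is harmless for the target $(C_{\zK}4ed)^{\sum k_i}$. Next, I would produce a probability measure $\mu$ on $B_X$ for which, for every polynomial $P$ on $X$ of degree $k$,
\begin{equation}\label{plan:one}
\int_{B_X}\log\!\left(\frac{|P(\mathbf z)|}{\|P\|}\right)d\mu(\mathbf z)\;\ge\;\frac{\log 2}{C_{\zK}}-k\log(4edC_{\zK}).
\end{equation}
Granted \eqref{plan:one}, the chain
\[
\log\|P_1\cdots P_n\|\;\ge\;\int_{B_X}\log|P_1\cdots P_n|\,d\mu\;=\;\sum_{i=1}^n\int_{B_X}\log|P_i|\,d\mu,
\]
combined with \eqref{plan:one} applied to each $P_i$ and exponentiation, yields \eqref{eq-cualquierespacio} directly.

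To establish \eqref{plan:one} I would proceed in two pieces. The multiplicative penalty $k\log(4edC_{\zK})$ should come from a small-ball / Remez-type bound
\[
\mu\bigl\{\mathbf z\in B_X:\ |P(\mathbf z)|<t\|P\|\bigr\}\;\le\;A(d,C_{\zK})\,t^{1/k},\qquad t\in(0,1],
\]
integrated in $\log t$; this is the type of distribution inequality available for polynomials on $d$-dimensional convex bodies in the style of Brudnyi--Ganzburg or Carbery--Wright, and with careful bookkeeping one can push the numerical constant to the shape $A(d,C_{\zK})\approx 4edC_{\zK}$. The additive gain $(\log 2)/C_{\zK}$ is the more delicate piece: it should come from localizing around the point where $|P|$ attains its maximum on $B_X$, using a Bernstein--Markov-type estimate to show that a neighbourhood where $|P|\ge\|P\|/2$ has $\mu$-mass at least $2^{-1/C_{\zK}}$, and splitting the logarithmic integral into this neighbourhood and its complement. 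The different values $C_{\zR}=1$ and $C_{\zC}=2$ reflect the doubling of real degrees of freedom needed to control analytic functions in the complex case.

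The main obstacle is keeping the base of the exponent \emph{linear} in $d$. Routing the argument through the John ellipsoid and then applying the Hilbert-space inequalities \eqref{eq-P} or \eqref{eq-MNPP} introduces an extra $d^{k/2}$ per polynomial, which dominates for $n\gg d$ and would defeat the very regime in which Theorem~\ref{prop-cualquierespacio} is supposed to improve \eqref{eq-BST}. I therefore expect the proof to work directly inside $X$, for instance by covering $B_X$ by $O(d^k)$ Banach-intrinsic patches on which $P$ is comparable to $\|P\|$ up to a bounded factor, together with a volumetric count that is linear in $d$; the corresponding measure $\mu$ supported on a well-chosen family of such patches should simultaneously produce the sharp $(4ed)^k$ base and the per-polynomial gain $2^{1/C_{\zK}}$.
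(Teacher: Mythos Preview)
Your overall strategy---bounding $\log\|P_1\cdots P_n\|$ from below by $\sum_i\int_{B_X}\log|P_i|\,d\mu$ and then controlling each integral via a Remez/small-ball inequality---is exactly what the paper does. The measure is simply normalized Lebesgue measure on $B_X$; there is no need for the patch construction you speculate about at the end, and the dehomogenization step is unnecessary since the Brudnyi--Ganzburg inequality already applies to arbitrary polynomials of degree $k$. The paper uses the precise form
\[
\mu\bigl(\{\mathbf z\in B_X:|P(\mathbf z)|\le t\}\bigr)\le 4d\,(t/2)^{1/k},
\]
splits the layer-cake integral $\int_0^\infty \mu(\{|P|\le e^{-t}\})\,dt$ at the point where the right-hand side equals $1$, and computes directly that $\int_{B_X}\log|P|\,d\mu\ge \ln 2 - k\ln(4ed)$.

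The one genuine misstep is your explanation of the additive $\log 2$. It does \emph{not} come from a Bernstein--Markov localization; your claim that $\{\,|P|\ge\|P\|/2\,\}$ has $\mu$-mass at least $1/2$ is false in general (take $P(\mathbf z)=z_1^k$ on a high-dimensional ball). The $\log 2$ is simply the prefactor $\tfrac12$ in the Brudnyi--Ganzburg bound $\|P\|\le\tfrac12(4d/\mu(V))^k\sup_V|P|$, which produces $(t/2)^{1/k}$ rather than $t^{1/k}$ in the small-ball estimate; everything then drops out of the integral split. For the complex case, the paper does not run a parallel argument: it sets $Q_i(\mathbf z)=|P_i(\mathbf z)|^2$, views $X$ as a real $2d$-dimensional space, and applies the real case to the $Q_i$ (now of degrees $2k_i$), which is where $C_{\zC}=2$ and the weaker gain $2^{n/2}$ come from.
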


Let us compare this result with the results on this problem mentioned in the Introduction. The inequality \eqref{eq-cualquierespacio} is better than \eqref{eq-BST} for finite dimensional spaces provided that the number of polynomials is large enough. The same holds true when we apply \eqref{eq-cualquierespacio} to homogeneous polynomials on finite dimensional $L_p$ spaces, with $1\leq p \leq 2$, and compare  it with \eqref{eq-P} and \eqref{eq-CPR}.

Comparing \eqref{eq-cualquierespacio} and \eqref{eq-MNPP} for finite dimensional real Hilbert spaces it is not as straightforward. In this case, which bound is better depends on the particular setting. For example, when we consider $n$ homogeneous polynomials of the same degree $k$ on a $d$ dimensional real Hilbert space, for large values of $k$ we see that \eqref{eq-cualquierespacio} is better than  \eqref{eq-MNPP} if $n > 8e^2d^2$, and that \eqref{eq-MNPP} is better than \eqref{eq-cualquierespacio} if $n< 8e^2d^2$. On the other hand, if we fix $k$, \eqref{eq-cualquierespacio} is better than  \eqref{eq-MNPP} for $n$ large enough.

\smallskip
Theorem~\ref{prop-cualquierespacio} is, in some sense we now explain,  sharp as a general result. A look at Theorem~\ref{prop-cualquierespacio} suggests an extension of the problem of finding the linear polarization constant of a space $X$ (see \cite{PR,RS}). We first define $\MMM_n(X)$ as the optimal constants such that, for any set of continuous scalar polynomials $P_1, \ldots, P_n$ over $X$, of degrees $k_1,\ldots, k_n$, the following inequality holds
\begin{equation}\nonumber
 \Vert P_1 \Vert \cdots \Vert P_n \Vert \leq \MMM_n(X)^{\sum_{i=1}^n k_i} \Vert P_1 \cdots P_n\Vert.
\end{equation}
Then we set
\begin{equation}\label{def M}
\MMM(X)= \overline{\displaystyle\lim_{{n\rightarrow \infty}}}\,\,\MMM_n(X).
\end{equation}
As a consequence of Theorem~\ref{prop-cualquierespacio} we have
$$\MMM(X) \leq C_{\zK}4ed.$$ Let us see that taking $X=\ell_1^d$ the linear growth in $d$ can be attained, which shows the sharpness of Theorem~\ref{prop-cualquierespacio}
. In this case, for $n > d$, we define for $i=1,\dots,n$ the $k$-homogeneous polynomial $P_i$ on $\ell_1^d(\zK)$ by
\[
	P_i(z_1,\ldots,z_d)= \left\{	 \begin{array}{lcl}
                  z_i^{k(n-d+1)}  & \mbox{ if } & i< d \\																
                    &             &     \\
                  z_d^{k} & \mbox{ if } & i\geq d
\end{array}
\right.
\]
This is a set of $n$ polynomials and, using Lagrange multipliers (see Lemma 2.3 from \cite{CPR}), it is easy to see the following:
\begin{eqnarray}
\Vert P_1 \cdots P_n\Vert & = & \frac{1}{d^{dk(n-d+1)}}\Vert P_1 \Vert  \cdots \Vert P_{d-1}\Vert \Vert P_{d}   \cdots  P_{n}\Vert \nonumber \\
&=& \frac{1}{d^{\sum_{i=1}^n\deg(P_i)}}\Vert P_1 \Vert  \cdots \Vert P_{d-1}\Vert \Vert P_{d} \Vert  \cdots \Vert P_{n}\Vert. \nonumber \
\end{eqnarray}
Therefore, $\MMM_n(\ell_1^d(\zK)) \geq d$, and thus $\MMM(\ell_1^d(\zK)) \geq d$. Then, we conclude that $\MMM(\ell_1^d(\zK)) $ increases at the same rate as $d$. Note that $k$ was arbitrarily, so we cannot improve this growth rate by fixing the degrees of the polynomials.

\medskip

For finite dimensional Hilbert spaces, we have the following result, which gives better bounds than those of Theorem~\ref{prop-cualquierespacio}.

\begin{prop}\label{prop-lowerboundhilbert} Let $\mathcal H$ be a $d$ dimensional (real or complex) Hilbert space and $P_1,\ldots,P_n$ scalar homogeneous polynomials of degrees $k_1,\ldots,k_n$ over $\mathcal H$. Then
\begin{equation}\label{eq-cualquierhilbert}
\Vert P_1 \Vert \cdots \Vert P_n \Vert \leq\left(\frac{e^{H_{dC_{\zK}}}}{4}\right)^{\sum_{i=1}^n k_i}\Vert P_1 \cdots P_n\Vert ,
\end{equation}
where $C_{\zR}=1$, $C_{\zC}=2$ and $H_d$ stands for the $d$th harmonic number $\sum_{k=1}^d \frac 1 k$.
\end{prop}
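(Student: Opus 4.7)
The plan is to use the classical Jensen--integration technique. For any probability measure $\mu$ supported on $B_{\mathcal{H}}$ one has
\begin{equation*}
\log\|P_1\cdots P_n\|_\infty \;\geq\; \int_{B_{\mathcal{H}}}\log|P_1\cdots P_n|\,d\mu \;=\; \sum_{i=1}^{n}\int\log|P_i|\,d\mu,
\end{equation*}
so the estimate \eqref{eq-cualquierhilbert} reduces to exhibiting a $\mu$ for which every $k$-homogeneous polynomial $P$ on $\mathcal{H}$ satisfies
\begin{equation*}
\int\log|P|\,d\mu \;\geq\; \log\|P\|_\infty - k\bigl(H_{d C_{\zK}}-\log 4\bigr).
\end{equation*}
Summing this over $i=1,\dots,n$ and exponentiating yields the desired inequality.

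The natural candidate for $\mu$ in the Hilbert setting is the rotation-invariant probability measure $\sigma$ on the unit sphere $S_{\mathcal{H}}$. I would first deduce the complex case from the real one: if $P$ is a complex $k$-homogeneous polynomial on a complex $d$-dimensional Hilbert space, then $|P|^{2}$ is a real $2k$-homogeneous polynomial on the underlying real Hilbert space of real dimension $2d$. Applying the real version in real dimension $2d$ and halving then gives the complex bound with $H_{d C_{\zC}} = H_{2d}$, which explains the appearance of $C_{\zK}$ in the subscript $d C_{\zK}$.

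For the real case, I would argue by induction on $d$. Fix $u\in S^{d-1}$ where $\|P\|_\infty$ is attained and decompose each unit vector as $\mathbf{z}=\cos\phi\,u+\sin\phi\,Y$ with $Y\in S^{d-2}\cap u^{\perp}$ and $\phi\in[0,\pi]$. The restriction $\phi\mapsto T_Y(\phi):=P(\cos\phi\,u+\sin\phi\,Y)$ is a real trigonometric polynomial of degree $k$, for which the Jensen/Mahler-type bound
\begin{equation*}
\int_{0}^{2\pi}\log|T_Y(\phi)|\,\frac{d\phi}{2\pi}\;\geq\;\log\|T_Y\|_\infty - k\log 2
\end{equation*}
handles the one-dimensional slice. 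Averaging this against the polar weight $c_d\sin^{d-2}\phi\,d\phi$ coming from $d\sigma = c_d\sin^{d-2}\phi\,d\phi\,d\sigma_{d-2}(Y)$, and controlling the slice suprema $\|T_Y\|_\infty$ inductively by viewing them as values of a homogeneous polynomial on $S^{d-2}$, produces a recursion whose constant picks up an additive $k/d$ at each reduction of dimension. Telescoping from $r=d$ down to $r=1$ yields the harmonic sum $H_d$, while two $\log 2$ contributions (one from the circle Jensen bound and one from the halving that accompanies the complexification via $|P|^{2}$) assemble into the $\log 4$.

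The principal technical obstacle is the precise inductive bookkeeping: relating the unweighted Jensen bound on $[0,2\pi]$ to the weighted integral against $\sin^{d-2}\phi\,d\phi$, and ensuring that the loss at each dimensional step is exactly $k/d$, so that the accumulated constant telescopes cleanly into $k H_d$ rather than some weaker logarithmic expression. One must in particular exploit the choice $u$ where $\|P\|$ is attained—so that $|T_Y(0)|=\|P\|_\infty$—in order to keep the recursion tight enough for the harmonic sum to close.
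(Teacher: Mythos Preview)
Your high-level strategy matches the paper's exactly: reduce to a single-polynomial log-integral bound via $\log\|P_1\cdots P_n\|\ge\sum_i\int\log|P_i|\,d\mu$, and deduce the complex case from the real one by applying the real estimate to $|P|^{2}$ on the underlying $2d$-dimensional real space. The paper, however, does not prove the single-polynomial bound: it quotes Lemma~3.3.4 of \cite{Ro2} in the form
\[
\int_0^\infty \mu\bigl(\{\mathbf z\in B_{\ell_2^d(\zR)}:|P(\mathbf z)|\le e^{-t}\}\bigr)\,dt \;\le\; k\bigl(\log 4+H_d\bigr)
\]
(with $\mu$ the normalized Lebesgue measure on the ball) and simply substitutes this for Lemma~\ref{lem8} in the proof of Theorem~\ref{prop-cualquierespacio}. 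So everything you write up to and including the complexification is already the paper's argument; the only genuinely new content in your proposal is the inductive sketch of that lemma.

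That sketch has real gaps. First, the Jensen/Mahler bound for a \emph{real} trigonometric polynomial $T$ of degree $k$ is $\int_0^{2\pi}\log|T|\,\frac{d\phi}{2\pi}\ge\log\|T\|_\infty-2k\log 2$, because $T(\phi)=e^{-ik\phi}Q(e^{i\phi})$ with $Q$ algebraic of degree $2k$; the full $\log 4$ thus appears already in the real circle step, and the complex-from-real passage contributes nothing extra (it doubles both the degree and the real dimension, which cancel). Second, you correctly flag the passage from the unweighted circular integral to the spherical weight $\sin^{d-2}\phi\,d\phi$ as the principal obstacle, but you do not overcome it, and you give no mechanism for the claim that each dimensional reduction costs exactly $k/d$. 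Note also that the slice supremum $\|T_Y\|_\infty$ is not a homogeneous polynomial in $Y$, so ``viewing it as one'' to invoke the inductive hypothesis needs an intermediate step you have not supplied. Until those pieces are nailed down, the telescoping to $H_d$ is a target rather than a proof.
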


To see that this results improves Theorem~\ref{prop-cualquierespacio} applied to Hilbert spaces it is enough to see that
$$\frac{ e^{H_{dC_{\zK}}} /4}{C_{\zK} 2ed}< 1 .$$
This follows from the fact that the sequence $H_l -\log(l)$ decreases, hence
$$
\frac{ e^{H_{dC_{\zK}}} /4}{C_{\zK} 2ed} \leq  \frac{ e^{H_1} /4}{2e} = \frac{1}{8}.$$

\medskip

Finally, using that every homogeneous polynomial $P:\zC^2\rightarrow \zC$ is the product of linear functions (see, for example, \cite[Lemma 3.3.6]{Ro2}), we obtain the following estimate for $2$-dimensional complex Banach spaces.

\begin{prop}\label{prop-dim2} Let $X$ be a complex $2$-dimensional Banach space and $P_1,\ldots,P_n$ scalar homogeneous polynomials of degrees $k_1,\ldots,k_n$ over $X$. Then
\begin{equation}\nonumber
\Vert P_1 \Vert \cdots \Vert P_n \Vert \leq   \ccc_{\kkk}(X)\Vert P_1 \cdots P_n\Vert ,
\end{equation}
where $\kkk=\sum_{j=1}^n k_i$ and $\ccc_{\kkk}(X)$ is the $\kkk-$th linear polarization constant of $X$.
\end{prop}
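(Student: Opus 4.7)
The plan is to reduce the bound on a product of homogeneous polynomials to a bound on a product of linear functionals, for which the $\mathbf{k}$-th linear polarization constant applies by definition. The crucial ingredient is that on a two-dimensional complex vector space every homogeneous polynomial factors completely into linear forms; this is the content of \cite[Lemma 3.3.6]{Ro2}, essentially a consequence of the fundamental theorem of algebra applied after dehomogenizing one variable.

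First I would fix a linear isomorphism $X\cong \zC^2$ (as a vector space) and, for each $i=1,\ldots,n$, write the $k_i$-homogeneous polynomial $P_i$ as
\[
P_i=\phi_{i,1}\,\phi_{i,2}\cdots \phi_{i,k_i},
\]
where each $\phi_{i,j}\in X^*$ is a continuous linear functional on $X$ (continuity is automatic in finite dimensions). Collecting all these factors produces $\mathbf{k}=\sum_{i=1}^n k_i$ linear functionals on $X$ whose product equals $P_1\cdots P_n$.

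Next I would use the trivial submultiplicativity of the supremum norm on $B_X$: for each $i$,
\[
\Vert P_i\Vert=\Vert\phi_{i,1}\cdots \phi_{i,k_i}\Vert\le \Vert\phi_{i,1}\Vert\cdots\Vert\phi_{i,k_i}\Vert,
\]
so that
\[
\Vert P_1\Vert\cdots \Vert P_n\Vert\le \prod_{i=1}^{n}\prod_{j=1}^{k_i}\Vert \phi_{i,j}\Vert.
\]
By the very definition of the $\mathbf{k}$-th linear polarization constant $\ccc_{\kkk}(X)$ (as in \cite{PR,RS}), applied to the $\kkk$ functionals $\{\phi_{i,j}\}$, the right-hand side is bounded by
\[
\ccc_{\kkk}(X)\,\Bigl\Vert \prod_{i,j}\phi_{i,j}\Bigr\Vert=\ccc_{\kkk}(X)\,\Vert P_1\cdots P_n\Vert,
\]
which is exactly the desired inequality.

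There is no real obstacle in this argument; the proposition is essentially a one-line consequence of the complete factorization of homogeneous polynomials in two complex variables combined with the definition of $\ccc_{\kkk}(X)$. The only minor subtlety is invoking the factorization on an abstract two-dimensional complex Banach space rather than on $\zC^2$ itself, which is handled by choosing any linear identification (the polarization constant depends on the norm, not on the chosen basis, so this identification is harmless).
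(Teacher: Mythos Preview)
Your proof is correct and follows essentially the same approach as the paper: factor each $P_i$ into linear forms via \cite[Lemma~3.3.6]{Ro2} and then apply the definition of $\ccc_{\kkk}(X)$ to the resulting $\kkk$ functionals. The only cosmetic difference is that the paper first normalizes the $P_i$ and the linear factors to have norm one (so that submultiplicativity appears as the inequality $L_i\ge 1$), whereas you work directly with unnormalized factors and invoke submultiplicativity explicitly; the underlying argument is the same.
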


This last result can be used to relate the linear polarization constant $\ccc(X)$ with the constant $\MMM(X)$ defined in  \eqref{def M}.

\begin{cor} Let $X$ be a complex $2$-dimensional Banach space, then
$$\MMM(X)=\ccc(X).$$
\end{cor}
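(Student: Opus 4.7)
\emph{Strategy.} I would prove the two inequalities $\MMM(X)\geq \ccc(X)$ and $\MMM(X)\leq \ccc(X)$ separately. Both reduce almost tautologically to the definitions once Proposition~\ref{prop-dim2} is available, together with the standard formula $\ccc(X)=\lim_k\ccc_k(X)^{1/k}$ for the linear polarization constant.

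\emph{Lower bound.} First I would test the defining inequality of $\MMM_n(X)$ on $n$ linear functionals $\phi_1,\dots,\phi_n\in X^*$. Each has degree $1$, so the exponent $\sum_i k_i$ equals $n$, and the inequality
$$\|\phi_1\|\cdots\|\phi_n\|\leq \MMM_n(X)^n\,\|\phi_1\cdots\phi_n\|$$
must hold for every choice of the $\phi_i$. By the very definition of $\ccc_n(X)$ this forces $\ccc_n(X)\leq \MMM_n(X)^n$, hence $\ccc_n(X)^{1/n}\leq \MMM_n(X)$. Passing to $\limsup$ as $n\to\infty$ gives $\ccc(X)\leq \MMM(X)$.

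\emph{Upper bound.} Conversely, given $n$ homogeneous polynomials of degrees $k_1,\dots,k_n$ with $\kkk=\sum_i k_i\geq n$, Proposition~\ref{prop-dim2} yields
$$\frac{\|P_1\|\cdots\|P_n\|}{\|P_1\cdots P_n\|}\leq \ccc_\kkk(X).$$
Raising both sides to the power $1/\kkk$ and taking the supremum over all admissible choices gives $\MMM_n(X)\leq \sup_{\kkk\geq n}\ccc_\kkk(X)^{1/\kkk}$. Since $\ccc_k(X)^{1/k}\to\ccc(X)$, the right-hand side converges to $\ccc(X)$ as $n\to\infty$, so $\MMM(X)\leq \ccc(X)$, and combining the two bounds yields the claimed equality. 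The only subtlety I anticipate is that $\MMM_n(X)$, as defined in \eqref{def M}, permits non-homogeneous polynomials while Proposition~\ref{prop-dim2} is stated only for homogeneous ones; I expect this to be handled routinely by homogenization (or by restricting to the homogeneous case in the $\limsup$, since the lower bound is already attained by linear polynomials), and it does not affect the asymptotic behavior that controls $\MMM(X)$.
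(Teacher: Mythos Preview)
Your argument is essentially identical to the paper's: both directions are obtained exactly as you describe, with the lower bound coming from testing on linear functionals (so $\ccc_n(X)^{1/n}\le \MMM_n(X)$) and the upper bound from Proposition~\ref{prop-dim2}, giving $\MMM_n(X)\le \sup_{l\ge n}\ccc_l(X)^{1/l}\to \ccc(X)$. The homogeneity issue you flag is not addressed in the paper's proof either; the paper simply applies Proposition~\ref{prop-dim2} as if it bounded $\MMM_n(X)$ directly, so your proposal matches it step for step.
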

\begin{proof}
The inequality
$$\MMM(X) \geq \ccc(X)$$
is immediate ---and holds for any Banach space---  since $\MMM_n(X) \geq \ccc_n(X)^{\frac 1 n}.$

For the other inequality, by Proposition \ref{prop-dim2}, we have
\begin{eqnarray}
\MMM_n(X) &\leq & \sup \{ \ccc_{\kkk}(X)^{\frac{1}{\kkk}}:k_1,\ldots,k_n\in\zN, \kkk =\sum_{j=1}^n k_i\} \nonumber \\
&= &\sup \{ \ccc_{l}(X)^{\frac{1}{l}}:l\geq n\}, \nonumber \
\end{eqnarray}
therefore
\begin{eqnarray}
\MMM(X) &\leq & \lim_{l\to \infty} \sup \{ \ccc_{l}(X)^{\frac{1}{l}}:l\geq n\} =\ccc(X). \nonumber \qedhere
\end{eqnarray}
\end{proof}

\subsection{The polynomial plank problem}

Our first main plank type result, and our most general one,  is the following.

\begin{thm}\label{main thm}\label{section planks principales}
Let $X$ be a complex Banach space and $P_1,\ldots,P_n:X \rightarrow \zC$ be norm one polynomials of degrees $k_1,\dots,k_n$. Given $a_1,\ldots,a_n \in \zR_{\geq 0}$ satisfying \mbox{$\sum_{i=1}^n a_i < \frac{1}{n^{n-1}}$},  there is $\textbf{z}_0 \in B_{X}$ such that
$$|P_i(\textbf{z}_0)| \geq a_i^{k_i} \mbox{ for } i=1,\ldots,n.$$
Moreover, if $X$ is finite dimensional, this also holds for $\sum_{i=1}^n a_i = \frac{1}{n^{n-1}}$.
\end{thm}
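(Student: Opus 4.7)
The plan is to apply the BST inequality \eqref{eq-BST} to a weighted product $Q = P_1^{m_1}\cdots P_n^{m_n}$ for suitable positive integer weights $m_i$, and then take $\textbf{z}_0\in B_X$ to be any point where $|Q|$ comes sufficiently close to $\|Q\|$. The weights will be chosen so that the BST lower bound on $\|Q\|$ is large enough to force each individual $|P_i(\textbf{z}_0)|$ to be at least $a_i^{k_i}$ at such a point.

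Concretely, \eqref{eq-BST} applied to the norm-one polynomials $P_1^{m_1},\ldots,P_n^{m_n}$ (of respective degrees $m_i k_i$) yields
\[
\|Q\| \;\geq\; \frac{\prod_{j=1}^n (m_j k_j)^{m_j k_j}}{M^M}, \qquad M=\sum_{j=1}^n m_j k_j.
\]
If the $m_i$ can be chosen so that this lower bound strictly exceeds $\max_i a_i^{m_i k_i}$, picking $\textbf{z}_0 \in B_X$ with $|Q(\textbf{z}_0)| > \max_i a_i^{m_i k_i}$ and writing $b_j = |P_j(\textbf{z}_0)| \leq 1$, the identity $\prod_j b_j^{m_j} = |Q(\textbf{z}_0)|$ gives
\[
b_i^{m_i} \;=\; \frac{|Q(\textbf{z}_0)|}{\prod_{j\neq i} b_j^{m_j}} \;\geq\; |Q(\textbf{z}_0)| \;>\; a_i^{m_i k_i},
\]
hence $b_i > a_i^{k_i}$ for every $i$, as required.

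The combinatorial heart of the argument is therefore to exhibit such weights. I would first look for positive reals $s_j = m_j k_j$ with $\prod_j s_j^{s_j}/S^S > a_i^{s_i}$ for each $i$ (where $S=\sum s_j$), and then rationalize and scale them to get integers, using that this inequality is invariant under common scaling of the $s_j$ and depends continuously on them. The natural choice $s_i = 1/\log(1/a_i)$ equalizes the right-hand side (every $a_i^{s_i}$ becomes $e^{-1}$) and reduces the requirement to the single entropy-type estimate
\[
T\log T + \sum_{i=1}^n \frac{\log\log(1/a_i)}{\log(1/a_i)} \;<\; 1, \qquad T=\sum_{i=1}^n\frac{1}{\log(1/a_i)}.
\]

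Establishing this estimate from the hypothesis $\sum a_i < 1/n^{n-1}$ is the step I expect to be the main obstacle. Direct calculation shows that the uniform configuration $a_i = 1/n^n$ (which saturates $\sum a_i = 1/n^{n-1}$) also saturates the entropy-type inequality; a Lagrange-multiplier analysis along the constraint surface $\sum a_i = \text{const.}$ should then identify the uniform configuration as the unique maximizer, yielding the strict inequality from the strict hypothesis. Finally, the equality case $\sum a_i = 1/n^{n-1}$ in finite dimensions follows by applying the strict case to $((1-\varepsilon)a_i)_{i=1}^n$ and extracting a convergent subsequence $\textbf{z}_\varepsilon \to \textbf{z}_0$ via compactness of $B_X$.
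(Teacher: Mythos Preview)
Your proposal follows essentially the same route as the paper. Both arguments apply the BST bound \eqref{eq-BST} to a weighted product $\prod_i P_i^{r_i}$, then use $\|P_j\|=1$ to isolate each factor, choose the weights proportional to $1/\log(1/a_i)$, pass to integer weights by a density/continuity argument, and handle the equality case in finite dimensions by compactness. The paper packages the density step as Lemma~\ref{lem5} and the choice-of-weights step as Lemma~\ref{lem4}; your outline matches these almost exactly.

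The one substantive difference is how the key entropy-type inequality (your displayed estimate, the paper's Lemma~\ref{lem4}) is proved. You propose a Lagrange-multiplier argument on the constraint surface $\sum a_i=\text{const.}$, observing that the uniform configuration saturates. This is plausible, but as stated it is incomplete: you would still need to verify that the uniform critical point is a global maximum (second-order analysis), rule out other interior critical points, and control the boundary behaviour as some $a_i\to 0$. The paper instead proves Lemma~\ref{lem4} directly by two applications of Jensen's inequality, using the concavity of $x\mapsto -1/\ln x$ on $(0,e^{-2})$ to bound $T\le 1/\ln n$, and the convexity of $x\mapsto x\ln x$ to bound the entropy sum. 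This route is shorter and avoids the delicate optimisation analysis; you may want to replace the Lagrange step by it.
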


The proof of this theorem will make use of the constant provided in \eqref{eq-BST}.
Although the constant~\eqref{eq-BST} is optimal for general complex Banach spaces, in some cases better constants have been obtained. As we have already mentioned, if $X$ is the complex Banach space $L_p(\mu)$ or $\mathcal S_p$, with $1\le p\le 2$, we have \eqref{eq-CPR}. Then, using \eqref{eq-CPR} instead of \eqref{eq-BST}, we obtain the following plank result.

\begin{thm}\label{main thm2}
Let $X$ be the complex Banach space $L_p(\mu)$  or $\mathcal S_p$, with $1\le p\le 2$, and $P_1,\ldots,P_n:X \rightarrow \zC$ be norm one homogeneous polynomials of degrees $k_1,\dots,k_n$. Given $a_1,\ldots,a_n \in \zR_{\geq 0}$ satisfying $\sum_{i=1}^n a_i^p < \frac{1}{n^{n-1}}$,  there is $\textbf{z}_0 \in B_{X}$ such that
$$|P_i(\textbf{z}_0)| \geq a_i^{k_i} \mbox{ for } i=1,\ldots,n.$$
Moreover, if $X$ is finite dimensional, this also holds for $\sum_{i=1}^n a_i^p = \frac{1}{n^{n-1}}$.
\end{thm}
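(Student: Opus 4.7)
The statement of Theorem~\ref{main thm2} is structurally identical to that of Theorem~\ref{main thm} (which I may assume), except that (i) the ambient Banach space is specialised to $L_p(\mu)$ or $\mathcal{S}_p$ with $1\leq p\leq 2$, (ii) the polynomials are assumed to be homogeneous, and (iii) the threshold involves $\sum a_i^p$ rather than $\sum a_i$. These three modifications dovetail with the difference between \eqref{eq-BST} and \eqref{eq-CPR}: the sharper bound \eqref{eq-CPR} holds for homogeneous polynomials on $L_p$ (or $\mathcal{S}_p$), $1\leq p\leq 2$, and differs from \eqref{eq-BST} by a $p$-th root. My plan is therefore to replay the proof of Theorem~\ref{main thm} step by step, invoking \eqref{eq-CPR} in place of \eqref{eq-BST} at the one point where the product--polynomial lower bound is applied, and choosing any auxiliary ambient space as an $\ell_p$-direct sum of $X$ (rather than an $\ell_\infty$-sum) so that it stays inside the class to which \eqref{eq-CPR} applies.

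Concretely, $X\oplus_p \ell_p^n$ is again an $L_p(\mu')$ space when $X=L_p(\mu)$ (via a disjoint enlargement of the measure space) and again a Schatten class when $X=\mathcal{S}_p$ (by appending an orthogonal $n$-dimensional Hilbert factor). Hence any auxiliary homogeneous polynomials $R_1,\dots,R_n$ built on such a direct sum in the proof of Theorem~\ref{main thm} still satisfy the hypotheses of \eqref{eq-CPR}. Applying \eqref{eq-CPR} to the $R_i$ yields an inequality of the same form as in the proof of Theorem~\ref{main thm} but with the constant $K^K/\prod k_i^{k_i}$ (where $K=\sum k_i$) replaced by its $p$-th root. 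Propagating this $p$-th-root improvement through the bookkeeping of the contradiction argument promotes the hypothesis $\sum a_i<1/n^{n-1}$ to $\sum a_i^p<1/n^{n-1}$, which is exactly the hypothesis of Theorem~\ref{main thm2}, and produces the required $\textbf{z}_0\in B_X$.

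For the ``moreover'' assertion in the finite-dimensional case, I would argue by compactness. Apply the strict-inequality version just obtained to the scaled sequence $((1-\varepsilon)a_i)_{i=1}^n$, which still satisfies $\sum((1-\varepsilon)a_i)^p<1/n^{n-1}$ for any $\varepsilon>0$, to produce vectors $\textbf{z}_\varepsilon\in B_X$ with $|P_i(\textbf{z}_\varepsilon)|\geq((1-\varepsilon)a_i)^{k_i}$ for all $i$. Finite-dimensionality yields compactness of $B_X$, so a subsequence $\textbf{z}_{\varepsilon_k}\to \textbf{z}_0\in B_X$ exists, and passing to the limit recovers $|P_i(\textbf{z}_0)|\geq a_i^{k_i}$ for every $i$.

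The main technical hurdle I expect is verifying that the $\ell_p$-sum geometry transmits exactly the exponent $p$ on the $a_i$ into the final estimate, i.e., that the $p$-th root coming out of \eqref{eq-CPR} aligns precisely with the $p$-power structure of the norm on the auxiliary space so as to produce $\sum a_i^p$ rather than some other power. This is a careful but routine computation, mirroring step-by-step the estimate that yielded $\sum a_i$ in the proof of Theorem~\ref{main thm}, but performed in the $\ell_p$-sum geometry rather than the $\ell_\infty$-sum geometry.
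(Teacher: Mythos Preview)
Your plan rests on a guess about the structure of the proof of Theorem~\ref{main thm} that does not match the paper. There is no auxiliary direct-sum space $X\oplus_\infty\ell_\infty^n$ (to be replaced by $X\oplus_p\ell_p^n$), no family of auxiliary polynomials $R_1,\dots,R_n$ on such a space, and no contradiction argument. In fact the paper proves Theorem~\ref{main thm2} \emph{directly} and remarks that Theorem~\ref{main thm} is obtained by setting $p=1$ in that very proof; so ``replaying the proof of Theorem~\ref{main thm}'' cannot serve as an independent input here. Because your proposal never specifies what the $R_i$ are or what estimate on them would be carried through, and the imagined scaffolding does not exist, the proposal as written is not a proof.

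The actual mechanism is this: work inside $X$ itself, apply \eqref{eq-CPR} not to $P_1,\dots,P_n$ but to the \emph{powers} $P_1^{r_1},\dots,P_n^{r_n}$ for suitably chosen integers $r_1,\dots,r_n$, obtaining
\[
\Big\|\prod_i P_i^{r_i}\Big\|\ \ge\ \Big(\prod_i s_i^{s_i}\Big)^{\frac{1}{p}\sum_j k_jr_j},\qquad s_i=\frac{k_ir_i}{\sum_j k_jr_j},
\]
and then use $\|P_j\|=1$ to strip off all factors but the $i$-th, yielding $|P_i(\textbf z)|\ge (s_1^{s_1}\cdots s_n^{s_n})^{k_i/(p s_i)}$ at a near-extremal point $\textbf z$. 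Two short lemmas do the remaining work: one (Lemma~\ref{lem4}) produces weights $(t_1,\dots,t_n)$ with $\sum t_i=1$ and $t_1^{t_1}\cdots t_n^{t_n}\ge b_i^{t_i}$ whenever $\sum b_i=1/n^{n-1}$ (this is where the threshold $1/n^{n-1}$ enters), and the other (Lemma~\ref{lem5}) shows the rational points $(s_1,\dots,s_n)$ obtainable from integer $r_i$ are dense, so one can approximate the $t_i$. The exponent $p$ enters \emph{only} through the $p$-th root in \eqref{eq-CPR}, which is exactly why the hypothesis becomes $\sum a_i^p<1/n^{n-1}$ after setting $b_i>a_i^p$. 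Your compactness argument for the finite-dimensional equality case is fine and matches the paper's in spirit.
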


As pointed out before, the constants provided in \eqref{eq-BST} and \eqref{eq-CPR} are optimal when the dimension of the underlying spaces are at least $n$ (the number of polynomials). Then, a natural next step is to use the sharper inequalities obtained for finite dimensional spaces in Theorem~\ref{prop-cualquierespacio}. To do this, we need the following.

\begin{prop}\label{main thm3}
Let $X$ be a finite dimensional Banach space, $n$ a natural number and suppose we have a positive constant $K<\frac{1}{\sqrt[n]{ne^2}}$ such that for any set  $P_1,\ldots,P_n:X \rightarrow \zC$  of norm one  polynomials we have
$$
\|P_1\cdots P_n\| \ge K^{\sum_{i=1}^n k_i} \, \|P_1\| \cdots \|P_n\|,
$$
where $k_1,\dots,k_n$ are the degrees of the polynomials. Then, given $a_1,\ldots,a_n \in \zR_{\geq 0}$, with $\sum_{i=1}^n a_i \leq nK^n$, there is $\textbf{z}_0 \in B_{X}$ such that
$$|P_i(\textbf{z}_0)| \geq a_i^{k_i}  \mbox{ for } i=1,\ldots,n.$$
\end{prop}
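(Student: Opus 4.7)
My plan is to exploit the hypothesis not on the original polynomials $P_1,\dots,P_n$ themselves but on their powers $P_1^{m_1},\dots,P_n^{m_n}$ for carefully chosen positive integers $m_i$. Since $\|P_i^{m_i}\|=1$ and $\deg P_i^{m_i}=m_ik_i$, the hypothesis immediately gives
\[
\|P_1^{m_1}\cdots P_n^{m_n}\|\ \ge\ K^{\sum_j m_jk_j}.
\]
By compactness of $B_X$ (here $X$ is finite dimensional), pick $\textbf{z}_m\in B_X$ realizing this supremum, so $\prod_j |P_j(\textbf{z}_m)|^{m_j}\ge K^{\sum_j m_jk_j}$. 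Because each $|P_j(\textbf{z}_m)|\le 1$, isolating the $i$-th factor gives
\[
|P_i(\textbf{z}_m)|\ \ge\ K^{\sum_j m_jk_j/m_i}.
\]
So the problem reduces to choosing the $m_i$ so that, for every $i$,
\[
t_i\ :=\ \frac{m_ik_i}{\sum_j m_jk_j}\ \ge\ \tau_i\ :=\ \frac{|\log K|}{|\log a_i|},
\]
which, after taking logarithms, is equivalent to $K^{\sum_j m_jk_j/m_i}\ge a_i^{k_i}$.

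Since $\sum_i t_i=1$ automatically, such a choice exists (with rational $t_i$, hence with integer $m_i$ after clearing denominators) precisely when $\sum_i\tau_i\le 1$. The heart of the argument is therefore to derive this inequality from the hypotheses $\sum a_i\le nK^n$ and $K<1/\sqrt[n]{ne^2}$. I consider $g(x)=-1/\log x$ and verify by a short computation of $g''$ that $g$ is strictly increasing and concave on $(0,e^{-2})$. The threshold $K<1/\sqrt[n]{ne^2}$ translates into $nK^n<e^{-2}$, which forces every $a_i$ (as $a_i\le\sum_j a_j\le nK^n$) into this concave regime. Jensen's inequality and the monotonicity of $g$ then yield
\[
\sum_{i=1}^n\tau_i\ =\ |\log K|\sum_{i=1}^n g(a_i)\ \le\ |\log K|\cdot n\, g\!\Bigl(\tfrac1n\textstyle\sum_j a_j\Bigr)\ \le\ |\log K|\cdot n\, g(K^n)\ =\ 1,
\]
the last equality being $g(K^n)=1/(n|\log K|)$.

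To finish, I select real $t_i\ge\tau_i$ with $\sum t_i=1$, approximate them by rationals of the form $m_ik_i/\sum_j m_jk_j$, and apply the reasoning above with these integer exponents. The resulting points $\textbf{z}_m\in B_X$ satisfy the required inequalities up to an arbitrarily small loss, and a standard compactness/continuity argument produces a point $\textbf{z}_0\in B_X$ with $|P_i(\textbf{z}_0)|\ge a_i^{k_i}$ for every $i$ (the boundary case $\sum a_i=nK^n$ being handled by the same limit). The main obstacle I anticipate is the concavity step: the specific threshold $1/\sqrt[n]{ne^2}$ appearing in the hypothesis is dictated precisely by the need to keep all $a_i$ inside the interval where $g$ is concave, so that Jensen yields the inequality in the favorable direction rather than the opposite one.
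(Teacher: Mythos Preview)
Your proposal is correct and follows essentially the same route as the paper. The paper's proof likewise applies the product hypothesis to the powers $P_i^{r_i}$, reduces the problem to finding weights $t_i>0$ with $\sum_i t_i=1$ and $K^{1/t_i}\ge b_i$ (its Lemma~\ref{lem7}), proves this via Jensen's inequality for the concave map $x\mapsto \log K/\log x$ on $(0,e^{-2})$ --- which is exactly your $g$ up to the factor $|\log K|$ --- then approximates the $t_i$ by ratios $k_ir_i/\sum_j k_jr_j$ (its Lemma~\ref{lem5}) and closes with the same compactness argument in $B_X$.
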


Then, combining Proposition~\ref{main thm3} and Theorem~\ref{prop-cualquierespacio}, we obtain the following plank type result for polynomials on finite dimensional spaces.

\begin{prop}Let $X$ be a $d$-dimensional Banach space over $\zK$, $C_\zK$ as in Theorem~\ref{prop-cualquierespacio} and $P_1,\ldots,P_n:X \rightarrow \zK$ a set of norm one polynomials of degrees $k_1,\dots,k_n$. Given  $a_1,\ldots,a_n \in \zR_{\geq 0}$ satisfying $\sum_{i=1}^n a_i \leq n\left(\frac{1}{C_\zK 4ed}\right)^n$, there is $\textbf{z}_0 \in B_{X}$ such that
$$|P_i(\textbf{z}_0)| \geq a_i^{k_i}  \mbox{ for } i=1,\ldots,n.$$

\end{prop}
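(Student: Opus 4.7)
The plan is to reduce the statement to a direct application of Proposition~\ref{main thm3} with the choice $K=\frac{1}{C_\zK 4ed}$. To do this I need to check two things: the lower-bound hypothesis on $\|P_1\cdots P_n\|$, and the numerical bound $K<\frac{1}{\sqrt[n]{ne^2}}$.

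For the first point, I would start from inequality~\eqref{eq-cualquierespacio} of Theorem~\ref{prop-cualquierespacio}. Rearranging it and discarding the factor $2^{n/C_\zK}\geq 1$ (which only makes the lower bound weaker) gives exactly
$$\|P_1\cdots P_n\|\geq \left(\frac{1}{C_\zK 4ed}\right)^{\sum_{i=1}^n k_i}\|P_1\|\cdots\|P_n\|=K^{\sum_{i=1}^n k_i}\|P_1\|\cdots\|P_n\|,$$
which is precisely the hypothesis required by Proposition~\ref{main thm3}.

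For the second point, the condition $K<\frac{1}{\sqrt[n]{ne^2}}$ amounts to $(C_\zK 4ed)^n> ne^2$. Since $d\geq 1$ and $C_\zK\geq 1$ one has $C_\zK 4ed\geq 4e$, and using $\log(4e)>2$ the inequality $n\log(C_\zK 4ed)>\log n+2$ follows easily for every $n\geq 1$, so the numerical requirement is met with room to spare.

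With both hypotheses verified, Proposition~\ref{main thm3} produces a point $\textbf{z}_0\in B_X$ with $|P_i(\textbf{z}_0)|\geq a_i^{k_i}$ whenever $\sum_{i=1}^n a_i\leq nK^n=n(C_\zK 4ed)^{-n}$, which is exactly the hypothesis of the statement. The argument is really just bookkeeping combining the two earlier results, and I do not foresee any genuine obstacle; the only small point of care is that Proposition~\ref{main thm3} is phrased for $\zC$-valued polynomials, so if $X$ is a real Banach space one needs a short complexification step to transfer the conclusion back to $X$. This is standard and does not affect the quantitative constants.
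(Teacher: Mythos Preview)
Your proposal is correct and matches the paper's approach exactly: the paper states this proposition immediately after Proposition~\ref{main thm3} with only the sentence ``combining Proposition~\ref{main thm3} and Theorem~\ref{prop-cualquierespacio}'' as justification, and your argument spells out precisely that combination, including the verification of the numerical condition $K<(ne^2)^{-1/n}$ that the paper leaves implicit.

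One small remark on your last paragraph: the complexification detour is not needed and would in fact damage the constants (you would end up with $C_{\zC}=2$ rather than $C_{\zR}=1$). The proof of Proposition~\ref{main thm3}, being the same density-and-limit argument as in Theorem~\ref{main thm2} with the product inequality taken as a hypothesis, uses nothing about the scalar field, so it applies directly to real $X$; the ``$\to\zC$'' in its statement is an inessential artifact.
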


\begin{rem}\label{remark-comparacion} It is natural to compare the plank type results described in this article to previous work. First, it is easy to see that for linear functions (i.e. homogeneous polynomials of degree one), we are far from recovering the optimal results of K.~Ball on the plank problem. On the other hand, the value of our results relies on the generality in which they can be stated. They can be applied for polynomials of arbitrary (and different) degrees, and a large range of positive numbers $a_1,\ldots, a_n$. Moreover, most of them also work for non homogeneous polynomials. In this way, we extend, and sometimes improve, previous work in the subject. For example, Theorem 5 of \cite{KK} can be recovered from Theorem~\ref{main thm} and Theorem~\ref{main thm2} as a particular case, taking polynomials of the same degrees, all the scalars with the same value, etc.
\end{rem}

\section{The Proofs of the lower bounds for products of polynomials}\label{proof lower bound}

\subsection{Proof of Theorem \ref{prop-cualquierespacio}}

In order to prove Theorem~\ref{prop-cualquierespacio} we will use Remez type inequalities for polynomials in several variables. The objective of Remez type inequalities is to give bounds for classes of functions over some fixed set, given that the modulus of the functions is bounded on some subset of prescribed measure. For example, the original inequality of Remez states the following.

\begin{quote}

Take $a>0$ and a polynomial $P:[-1,1+a]\rightarrow \zR$ of degree $k$ such that
$$\displaystyle\sup_{t\in V} |P(t)| \leq 1$$
for some measurable subset $V\subseteq [-1,1+a]$, with $|V|\geq 2$. Then
$$\displaystyle\sup_{t\in [-1,1+a]} |P(t)| \leq \displaystyle\sup_{t\in [-1,1+a]} |T_k(t)|, $$
where $T_k$ stands for the Chebyshev polynomial of degree $k$.
\end{quote}

This inequality, combined with some properties of the Chebyshev polynomials, produces the following corollary, which most applications of Remez inequality use.

\begin{cor} Let  $P:\zR\rightarrow \zR$ be a polynomial of degree $k$, $I\subset \zR$ be an interval and $V\subseteq I$ an arbitrary measurable set, then
\begin{equation}\label{remezoriginal}
\displaystyle\sup_{t\in I} |P(t)| \leq \left( \frac {4 |I|}{|V|} \right)^k \displaystyle\sup_{t\in V} |P(t)|.
\end{equation}
\end{cor}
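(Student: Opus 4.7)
The plan is to deduce \eqref{remezoriginal} from the Remez inequality quoted just above by (i) normalizing $I$ and $V$ through an affine change of variables, (ii) applying the Remez inequality to the transformed polynomial, and (iii) bounding the extremal value of the Chebyshev polynomial $T_k$ on the normalizing interval.

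First I would dispose of the trivial case $|V|=|I|$, where the inequality amounts to $\sup_I|P|\le 4^k\sup_I|P|$. Assuming $|V|<|I|$, I would set $a=2|I|/|V|-2>0$ and pick the unique affine bijection $\phi\colon I\to[-1,1+a]$. Then $|I|/|V|=(2+a)/2$, and the image $\phi(V)\subseteq[-1,1+a]$ has Lebesgue measure exactly $2$. Applying the Remez inequality stated in the excerpt to $Q:=P\circ\phi^{-1}$, which is again a polynomial of degree $k$, and homogenizing in $Q$ to remove the normalization $\sup_V|Q|\le 1$, I obtain
\begin{equation*}
\sup_{t\in I}|P(t)| \;=\; \sup_{s\in[-1,1+a]}|Q(s)| \;\le\; \Bigl(\sup_{s\in[-1,1+a]}|T_k(s)|\Bigr)\,\sup_{t\in V}|P(t)|.
\end{equation*}

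It then remains to estimate $\sup_{[-1,1+a]}|T_k|$. Since $|T_k|\le 1$ on $[-1,1]$ and $T_k$ is increasing on $[1,\infty)$, the supremum is attained at $t=1+a$, and the standard closed form
\begin{equation*}
T_k(x)=\tfrac{1}{2}\bigl[(x+\sqrt{x^2-1})^k+(x-\sqrt{x^2-1})^k\bigr], \qquad x\ge 1,
\end{equation*}
combined with the trivial estimate $\sqrt{(1+a)^2-1}\le 1+a$, yields $T_k(1+a)\le(2(1+a))^k$. Since $2(1+a)\le 2(2+a)=4|I|/|V|$, I conclude
\begin{equation*}
\sup_{t\in I}|P(t)|\;\le\;\Bigl(\tfrac{4|I|}{|V|}\Bigr)^{k}\sup_{t\in V}|P(t)|,
\end{equation*}
which is the desired inequality.

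No serious obstacle is expected here; the proof is a routine normalization followed by a straightforward pointwise bound for $T_k$ outside $[-1,1]$. The only mild point is to verify that the bound $(2(1+a))^k$ on $T_k(1+a)$ is comfortably absorbed by the cleaner factor $(4|I|/|V|)^k=(2(2+a))^k$, which is what provides the final form with the universal constant $4$.
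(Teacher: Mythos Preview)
Your proof is correct. The paper does not actually give a proof of this corollary: it merely states that the corollary follows from the Remez inequality ``combined with some properties of the Chebyshev polynomials,'' and then moves on. Your argument supplies precisely those omitted details---the affine rescaling to normalize $|V|$ to $2$, and the elementary bound $T_k(1+a)\le(2(1+a))^k\le(2(2+a))^k$ coming from the explicit formula for $T_k$ on $[1,\infty)$---so it is exactly the route the paper has in mind. One trivial edge case you might mention for completeness is $\sup_V|P|=0$ (forcing $P\equiv 0$ when $|V|>0$), but this does not affect the argument.
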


We are interested in inequalities similar to \eqref{remezoriginal}, but for polynomials on several variables. Y.~Brudnyi and M.~Ganzburg studied Remez type inequalities for polynomials on several variables in \cite{BG}. As the original result of Remez, they stated their main result in terms of the Chebyshev polynomials.
\begin{thm}\label{teo brudnyi} 	Let $X$ be a $d$-dimensional real space, $\lambda$ a positive number and $P:X\rightarrow\zR$ a polynomial   of degree $k$ such that
$$\displaystyle\sup_{t\in V} |P(t)| \leq 1$$
for some measurable subset $V\subseteq B_X$, with $\mu(V)\geq \lambda$, where $\mu$ is the normalized Lebesgue measure over $B_X$. Then
$$\Vert P\Vert \leq T_k\left( \frac{ 1+ (1- \lambda)^{\frac 1 d} }{ 1- (1-\lambda)^{\frac 1 d} }\right).$$
\end{thm}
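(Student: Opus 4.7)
The plan is to reduce the multidimensional statement to the classical one-dimensional Remez inequality by slicing $B_X$ along rays emanating from a maximizer of $|P|$. By compactness, pick $x_0 \in B_X$ with $|P(x_0)| = \Vert P \Vert$, and for each unit direction $\xi \in S^{d-1}$ set $\rho(\xi) = \sup\{r \geq 0 : x_0 + r\xi \in B_X\}$ and $V_\xi = \{r \in [0,\rho(\xi)] : x_0 + r\xi \in V\}$. The restriction $q_\xi(r) = P(x_0 + r\xi)$ is a univariate polynomial of degree at most $k$ on $[0,\rho(\xi)]$ satisfying $|q_\xi(0)| = \Vert P \Vert$ and $\sup_{V_\xi} |q_\xi| \leq 1$. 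Applying the classical one-dimensional Remez inequality on this interval then yields
$$\Vert P \Vert \leq T_k\!\left(\frac{2\rho(\xi) - |V_\xi|}{|V_\xi|}\right)$$
for every direction $\xi$.

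The core of the argument is therefore to locate a single direction $\xi_0$ along which the ratio $|V_{\xi_0}|/\rho(\xi_0)$ is not too small. Writing $\sigma = (1-\lambda)^{1/d}$, the target is $|V_{\xi_0}| \geq (1-\sigma)\rho(\xi_0)$, since this forces the argument of $T_k$ to be at most $(1+\sigma)/(1-\sigma)$ and the stated bound then follows from the monotonicity of $T_k$ on $[1,\infty)$. To produce such a direction I would use the polar decompositions (centered at $x_0$)
$$\mathrm{Vol}(V) = \int_{S^{d-1}} \int_{V_\xi} r^{d-1}\, dr\, d\xi, \qquad \mathrm{Vol}(B_X) = \frac{1}{d}\int_{S^{d-1}} \rho(\xi)^d\, d\xi.$$
Under a uniform constraint $|V_\xi| \leq s\rho(\xi)$, the inner integral is maximized by placing $V_\xi$ at the outer end $[(1-s)\rho(\xi),\rho(\xi)]$ of the ray (since $r^{d-1}$ is increasing), giving $\int_{V_\xi} r^{d-1}\, dr \leq \frac{1}{d}\rho(\xi)^d\bigl(1-(1-s)^d\bigr)$. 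Integrating over $\xi$ yields $\mathrm{Vol}(V) \leq (1-(1-s)^d)\,\mathrm{Vol}(B_X)$, so $\mu(V) \geq \lambda$ forces $s \geq 1 - (1-\lambda)^{1/d} = 1 - \sigma$. Consequently some direction $\xi_0$ must satisfy $|V_{\xi_0}| \geq (1-\sigma)\rho(\xi_0)$, closing the loop.

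The main obstacle I anticipate is the \emph{outer end is worst} rearrangement step: it is intuitively clear that shifting the 1D mass of $V_\xi$ outward (keeping $|V_\xi|$ fixed) only increases $\int_{V_\xi} r^{d-1}\, dr$, but the estimate must be proved pointwise in $\xi$ and then integrated cleanly. A secondary technicality is ensuring the \emph{existence} of an extremal $\xi_0$ (rather than merely a sequence approaching the extremal fraction), which can be handled by contrapositive together with a small perturbation in $\lambda$. Everything else in the argument is either a compactness choice of the maximizer $x_0$ or a direct appeal to the classical univariate Remez inequality, so no essentially new ideas beyond the polar-coordinate bookkeeping are required.
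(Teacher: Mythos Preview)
The paper does not prove this theorem: Theorem~\ref{teo brudnyi} is quoted from Brudnyi and Ganzburg \cite{BG} as an external result and is used only as background for the corollary (inequality~(8) of \cite{BG}) that the paper actually applies. There is thus no ``paper's own proof'' to compare against.

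For what it is worth, your outline is essentially the original Brudnyi--Ganzburg argument, and it is correct. The ``outer end is worst'' step is just the bathtub principle: since $r^{d-1}$ is increasing on $[0,\rho(\xi)]$, a subset of prescribed one-dimensional measure $m$ maximizes $\int r^{d-1}\,dr$ when it equals $[\rho(\xi)-m,\rho(\xi)]$, which gives exactly $\tfrac{1}{d}\rho(\xi)^d\bigl(1-(1-m/\rho(\xi))^d\bigr)$. The ``existence of an extremal $\xi_0$'' worry is a non-issue: the polar-coordinate estimate yields $s^\ast:=\sup_{\xi}|V_\xi|/\rho(\xi)\ge 1-\sigma$, so for every $\varepsilon>0$ one can pick $\xi$ with ratio exceeding $1-\sigma-\varepsilon$, apply the one-dimensional Remez bound to get $\Vert P\Vert\le T_k\bigl((1+\sigma+\varepsilon)/(1-\sigma-\varepsilon)\bigr)$, and let $\varepsilon\to 0$ using the continuity of $T_k$. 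No perturbation in $\lambda$ is needed.
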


Just as in the applications of the original Remez inequality, rather than using Theorem~\ref{teo brudnyi}, we will use next proposition (see inequality (8) from \cite{BG}), which is a corollary from the main result of \cite{BG}.

\begin{prop} Let $X$ be a $d$-dimensional real space and $P:X\rightarrow\zR$ a polynomial of degree $k$. Given any Lebesgue measurable subset $V\subseteq B_X$, we have
$$\displaystyle\sup_{\textbf{z}\in B_X} |P(\textbf{z})| \leq \frac{1}{2} \left( \frac{4d}{\mu(V)}\right)^k \displaystyle\sup_{\textbf{z}\in V} |P(\textbf{z})|,$$
where $\mu$ is the normalized Lebesgue measure over $B_X$.
\end{prop}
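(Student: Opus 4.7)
The plan is to derive this proposition as a short corollary of the preceding Theorem~\ref{teo brudnyi} by replacing the Chebyshev expression on its right-hand side with an elementary polynomial estimate. First I would dispose of the trivial case $\mu(V)=0$ (in which the bound is vacuous) and, by homogeneity of the inequality in $P$, reduce to the normalization $\sup_{\textbf{z}\in V}|P(\textbf{z})|=1$. Setting $\lambda=\mu(V)\in(0,1]$, Theorem~\ref{teo brudnyi} immediately gives
\[
\|P\| \;\leq\; T_k\!\left(\frac{1+(1-\lambda)^{1/d}}{1-(1-\lambda)^{1/d}}\right),
\]
so the whole task is to show the two numerical facts: (i) the argument $y := \frac{1+(1-\lambda)^{1/d}}{1-(1-\lambda)^{1/d}}$ satisfies $y \leq 2d/\lambda$, and (ii) $T_k(y) \leq \tfrac{1}{2}(2y)^k$ for $y\geq 1$.

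For (i) I would invoke Bernoulli's inequality with exponent $1/d\in(0,1]$: $(1-\lambda)^{1/d}\leq 1-\lambda/d$, which implies $1-(1-\lambda)^{1/d}\geq \lambda/d$; combining this with the trivial bound $1+(1-\lambda)^{1/d}\leq 2$ on the numerator gives the desired $y\leq 2d/\lambda$. For (ii) I would use the standard identity $T_k(y)=\tfrac{1}{2}(u^k+v^k)$, where $u=y+\sqrt{y^2-1}$ and $v=y-\sqrt{y^2-1}$ satisfy $u,v\geq 0$, $uv=1$, and $u+v=2y$. Because every term in the binomial expansion of $(u+v)^k$ is nonnegative and $u^k,v^k$ are among those terms, one has $u^k+v^k\leq (u+v)^k=(2y)^k$, hence $T_k(y)\leq \tfrac{1}{2}(2y)^k$.

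Combining (i), (ii), and the fact that $T_k$ is increasing on $[1,\infty)$, I would conclude
\[
\|P\| \;\leq\; T_k\!\left(\frac{2d}{\lambda}\right) \;\leq\; \frac{1}{2}\!\left(\frac{4d}{\lambda}\right)^k \;=\; \frac{1}{2}\!\left(\frac{4d}{\mu(V)}\right)^k,
\]
and then undo the normalization to recover the factor $\sup_{V}|P|$ on the right-hand side. I do not expect any serious obstacle: both reductions are standard, and the only point requiring mild care is to check that $y\geq 1$ (immediate since $(1-\lambda)^{1/d}\in[0,1]$) so that the Chebyshev estimate and the monotonicity of $T_k$ are legitimately applicable.
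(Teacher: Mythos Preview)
Your derivation is correct: the Bernoulli bound $(1-\lambda)^{1/d}\le 1-\lambda/d$ yields $y\le 2d/\lambda$, and the identity $T_k(y)=\tfrac12(u^k+v^k)$ with $u+v=2y$, $u,v\ge 0$ gives $T_k(y)\le \tfrac12(2y)^k$; monotonicity of $T_k$ on $[1,\infty)$ then finishes the argument exactly as you say. The paper does not supply its own proof here but simply cites this as inequality~(8) in Brudnyi--Ganzburg \cite{BG}, noting that it is a corollary of their main theorem (the preceding Theorem~\ref{teo brudnyi}); your argument is precisely the standard way of extracting that corollary, so there is no methodological divergence to comment on.
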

As an immediate consequence of this result, we have the following inequality (see inequality (14) from \cite{BG}). If $P$ is a norm one polynomial of degree $k$ over a finite $d$-dimensional Banach space $X$, then
\begin{equation}\label{remez}
\mu(\{\textbf{z}\in B_X: |P(\textbf{z})| \leq t\}) \leq 4d \left(\frac{t}{2}\right)^{\frac 1 k},
\end{equation}
for any $0<t<1$.

Finally, we will need to use the following lemma.
\begin{lem}\label{lem8} Let $P:X\rightarrow \zR$ be a norm one polynomial of degree $k$. Then
$$\int_0^{+\infty} \mu(\{\textbf{z}\in B_X: |P(\textbf{z})| \leq e^{-t}\})\,\,\, dt\leq  -\ln\left(\frac{2}{(4d)^{k}}\right) + k,$$
where $\mu$ is the normalized Lebesgue measure over $B_X$.
\end{lem}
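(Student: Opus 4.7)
The plan is to bound the integrand $f(t) := \mu(\{\textbf{z}\in B_X: |P(\textbf{z})| \leq e^{-t}\})$ in two ways and split the integral at the crossover point. First, for every $t>0$ we have $e^{-t}\in(0,1)$, so the Remez-type inequality \eqref{remez} applies with parameter $e^{-t}$, giving
\[
f(t)\;\le\;4d\left(\frac{e^{-t}}{2}\right)^{1/k}\;=\;4d\cdot 2^{-1/k}\cdot e^{-t/k}.
\]
On the other hand, since $\mu$ is a probability measure we always have the trivial bound $f(t)\le 1$.

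Next, I would find the unique $t_0\ge 0$ at which the two bounds agree, namely the solution of $4d\cdot 2^{-1/k}\cdot e^{-t_0/k}=1$, which gives
\[
t_0\;=\;k\ln(4d)\;-\;\ln 2.
\]
(Note $t_0\ge 0$ since $4d\ge 4$ and $k\ge 1$, so the split below is meaningful.) On $[0,t_0]$ I would use the trivial bound, and on $[t_0,\infty)$ I would use the Remez bound:
\[
\int_0^\infty f(t)\,dt\;\le\;\int_0^{t_0}1\,dt\;+\;\int_{t_0}^\infty 4d\cdot 2^{-1/k}\cdot e^{-t/k}\,dt.
\]

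Finally, I would compute the two pieces directly. The first integral equals $t_0=k\ln(4d)-\ln 2$. For the second, antidifferentiating gives $4d\cdot 2^{-1/k}\cdot k\cdot e^{-t_0/k}$, and by the definition of $t_0$ the factor $4d\cdot 2^{-1/k}\cdot e^{-t_0/k}$ equals $1$, so the second integral is simply $k$. Adding them produces
\[
\int_0^\infty f(t)\,dt\;\le\;k\ln(4d)-\ln 2+k\;=\;-\ln\!\left(\frac{2}{(4d)^{k}}\right)+k,
\]
which is exactly the claim. There is no real obstacle here; the only subtlety is checking that $t_0\ge 0$ so that splitting the integral at $t_0$ is legitimate, and that the Remez estimate \eqref{remez} is applicable throughout the range $t>0$ (which is automatic since $e^{-t}<1$).
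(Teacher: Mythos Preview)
Your proof is correct and follows essentially the same approach as the paper: split the integral at the point where the Remez bound $4d\cdot 2^{-1/k}e^{-t/k}$ meets the trivial bound $1$, use the trivial bound before and the Remez bound after, and compute. Your split point $t_0=k\ln(4d)-\ln 2$ is exactly the paper's $-\ln\!\bigl(2/(4d)^k\bigr)$, and the computations coincide; your explicit verification that $t_0\ge 0$ is a small but welcome addition.
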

\begin{proof}To simplify notation let us writte $$V_t=\{\textbf{z}\in B_X: |P(\textbf{z})| \leq e^{-t}\}.$$ Then, using the Remez type inequality~(\ref{remez}), we have
\begin{eqnarray*}
\int_0^{+\infty} \mu(V_t)\,\,\,dt&=& \int_0^{-\ln \left(\frac{2}{(4d)^{k}}\right)} \mu(V_t)\,\,\,dt +\int_{-\ln\left(\frac{2}{(4d)^{k}}\right)}^{+\infty} \mu(V_t)\,\,\,dt \nonumber \\
&\leq& \int_0^{-\ln\left(\frac{2}{(4d)^{k}}\right)} 1 \,\,\,dt +\int_{-\ln\left(\frac{2}{(4d)^{k}}\right)}^{+\infty} 4d \frac{e^{\frac{-t}{k}}}{2^{\frac 1 k}} \,\,\,dt\nonumber \\
&=& -\ln\left(\frac{2}{(4d)^{k}}\right) + \frac{4d}{2^{\frac{1}{k}}}    (-k)e^{\frac{-t}{k}} \Bigg|_{-\ln\left(\frac{2}{(4d)^{k}}\right)}^{+\infty} \nonumber \\
&=& -\ln\left(\frac{2}{(4d)^{k}}\right) + k.  \qedhere \nonumber
\end{eqnarray*}
\end{proof}

\begin{proof}[Proof of Theorem \ref{prop-cualquierespacio}]

Given $P_1,\dots,P_n:X\rightarrow \zK$  polynomials of degree $k_1,\dots,k_n$, we have to prove that
\begin{equation}\nonumber
\|P_1\cdots P_n\| \ge \frac{2^{\frac{n}{C_{\zK}}}}{(C_{\zK}4ed)^{\sum_{i=1}^n k_i}} \, \|P_1\| \cdots \|P_n\|.
\end{equation}
We start with the real case. We may assume all the polynomials have norm one. Using Lemma~\ref{lem8} we have:
\begin{eqnarray}
\ln(\|P_1\cdots P_n\|)&=&\displaystyle\sup_{\textbf{z}\in B_X}  \ln\left(\prod_{i=1}^n |P_i(\textbf{z})|\right) \nonumber \\
&\geq&  \int_{B_X} \ln\left(\prod_{i=1}^n |P_i(\textbf{z})|\right) \,\,\,d\mu(\textbf{z})  \nonumber \\
&=&\sum_{i=1}^n  \int_{B_X} \ln |P_i(\textbf{z})|\,\,\, d\mu(\textbf{z}) \nonumber \\
&=&-\sum_{i=1}^n  \int_{B_X} -\ln |P_i(\textbf{z})|\,\,\, d\mu(\textbf{z}) \nonumber \\
&=&-\sum_{i=1}^n  \int_0^{+\infty} \mu(\{\textbf{z}\in B_X: |P_i(\textbf{z})| \leq e^{-t}\}) \,\,\,dt \nonumber \\
&\geq&\sum_{i=1}^n  \ln\left(\frac{2}{(4d)^{k_i}}\right) - k_i. \nonumber \
\end{eqnarray}
Therefore
\begin{eqnarray*}
\|P_1\cdots P_n\|&\geq& \exp\left\{ \sum_{i=1}^n  \ln\left(\frac{2}{(4d)^{k_i}}\right) - k_i \right\} \\
&=&\prod_{i=1}^n \frac{2}{(4d)^{k_i}}\frac{1}{e^{k_i}} \\
&=&\frac{2^n}{(4de)^{\sum_{i=1}^n k_i}}, \
\end{eqnarray*}
as desired.

To prove the complex case we will use the real case. Let $X$ be a $d-$dimensional complex Banach space and $P_1,\dots,P_n:X\rightarrow \zC$ polynomials of degree $k_1,\dots,k_n$. Take $Y$ the $2d-$dimensional real Banach space obtained from thinking $X$ as a real space, and consider the polynomials $Q_1,\dots,Q_n:Y\rightarrow \zR$, of degrees $2k_1,\dots,2k_n$, defined as
$$Q_i(\textbf{z})=|P_i(\textbf{z})|^2, \ \quad  i=1,\dots,n.$$
Applying inequality \eqref{eq-cualquierespacio} for polynomials on a real Banach space to these polynomials we obtain
\begin{eqnarray}
\|P_1\|^2 \cdots \|P_n\|^2 \frac{2^n}{(8de)^{\sum_{i=1}^n 2k_i}} &=& \|Q_1\| \cdots \|Q_n\| \frac{2^n}{(8de)^{\sum_{i=1}^n 2k_i}} \nonumber \\
&\leq&  \|Q_1 \cdots Q_n\|=   \|P_1 \cdots P_n\|^2,\nonumber \
\end{eqnarray}
which ends the proof.
\end{proof}

\subsection{Proof of Proposition~\ref{prop-lowerboundhilbert}}

A cornerstone on the proof of Theorem~\ref{prop-cualquierespacio} was the use of the Remez type ine\-qua\-li\-ty \eqref{remez} to obtain Lemma~\ref{lem8}.
But when we restrict ourselves to homogeneous polynomials over Hilbert spaces we can prove the following sharper lemma (see \cite{Ro2}, Lemma~3.3.4).

\begin{lem}\label{lem8homog} Let $P:\ell_2^d(\zR) \rightarrow \zR$ be a norm one homogeneous polynomial of degree~$k$. Then
$$\int_0^{+\infty} \mu(\{\textbf{z}\in B: |P(\textbf{z})| \leq e^{-t}\})\leq  k(\ln\left(4\right) + H_d),$$
where $H_d$ stands for the $d$th harmonic number.
\end{lem}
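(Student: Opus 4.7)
The plan is to reduce to $-\int_B \ln|P|\,d\mu$ by the layer-cake identity, split this into a radial ($k/d$) and a spherical contribution via polar coordinates using the homogeneity of $P$, and then control the spherical integral by averaging along random great circles together with a Mahler measure lower bound for trigonometric polynomials.

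Because $\|P\|_B=1$, the integrand $-\ln|P|$ is nonnegative on $B$ and Fubini yields $\int_0^{+\infty}\mu(V_t)\,dt=-\int_B\ln|P|\,d\mu$, where $V_t=\{\textbf{z}\in B:\,|P(\textbf{z})|\leq e^{-t}\}$. Writing $\textbf{z}=r\theta$ with $r\in[0,1]$, $\theta\in S^{d-1}$ so that $d\mu=d\,r^{d-1}\,dr\,d\sigma(\theta)$ and $\ln|P(r\theta)|=k\ln r+\ln|P(\theta)|$, the radial integral $\int_0^1(-k\ln r)\,d\,r^{d-1}\,dr$ evaluates to $k/d$; hence
\[
\int_0^{+\infty}\mu(V_t)\,dt=\frac{k}{d}-\int_{S^{d-1}}\ln|P(\theta)|\,d\sigma(\theta),
\]
and the task reduces to bounding $-\int_{S^{d-1}}\ln|P|\,d\sigma$. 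For this I would exploit that, by rotation invariance, if $(v,\eta)$ is a uniform ordered orthonormal pair in $\zR^d$ and $\phi$ is independent uniform on $[0,2\pi]$, then $\cos\phi\,v+\sin\phi\,\eta$ is uniformly distributed on $S^{d-1}$. For each fixed pair, $f_{v,\eta}(\phi):=P(\cos\phi\,v+\sin\phi\,\eta)$ is a real trigonometric polynomial of degree at most $k$ (the restriction of the $k$-homogeneous $P$ to the unit circle in the $2$-plane spanned by $v$ and $\eta$), with $\|f_{v,\eta}\|_\infty\leq \|P\|_{S^{d-1}}=1$, and
\[
-\int_{S^{d-1}}\ln|P|\,d\sigma=\mathbb{E}_{(v,\eta)}\!\left[-\frac{1}{2\pi}\int_0^{2\pi}\ln|f_{v,\eta}(\phi)|\,d\phi\right].
\]

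The key analytic input---and the step I expect to be the main obstacle---is a Mahler-type estimate: for any trigonometric polynomial $f$ of degree $\leq k$,
\[
-\frac{1}{2\pi}\int_0^{2\pi}\ln|f(\phi)|\,d\phi \leq k\ln 4 - \ln\|f\|_\infty.
\]
I would prove it by writing $f(\phi)=e^{-ik\phi}Q(e^{i\phi})$ with $Q$ a polynomial of degree at most $2k$ (so $|f|=|Q|$ on $|z|=1$ and $\frac{1}{2\pi}\int\ln|f|\,d\phi=\ln M(Q)$), factoring $Q(z)=a_{2k}\prod_{j=1}^{2k}(z-z_j)$, and combining $\|Q\|_\infty\leq|a_{2k}|\prod(1+|z_j|)$ with the elementary inequality $\max(1,|z_j|)/(1+|z_j|)\geq 1/2$ to conclude $M(Q)=|a_{2k}|\prod\max(1,|z_j|)\geq \|Q\|_\infty\cdot 4^{-k}$. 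Averaging this over $(v,\eta)$ gives
\[
-\int_{S^{d-1}}\ln|P|\,d\sigma \leq k\ln 4 + \mathbb{E}_{(v,\eta)}\bigl[-\ln\|f_{v,\eta}\|_\infty\bigr];
\]
the remaining delicate task is to absorb the sup-norm correction into $kH_{d-1}$, for instance by iterating the great-circle averaging one dimension at a time (each step contributing an extra $k/j$ from the analogous polar identity) until one accumulates $kH_{d-1}$, or alternatively by using a Remez-type inequality for homogeneous polynomials on $S^{d-1}$. Combining with the radial $k/d$ then yields the claimed bound $k(\ln 4+H_d)$.
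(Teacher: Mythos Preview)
The paper does not actually prove this lemma: it merely quotes it from \cite{Ro2}, Lemma~3.3.4, so there is no argument here to compare your proposal against. That said, let me assess the proposal on its own.

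Your reduction is correct and well executed: the layer-cake identity gives $\int_0^\infty\mu(V_t)\,dt=-\int_B\ln|P|\,d\mu$, the polar split yields the radial contribution $k/d$, the great-circle averaging is a valid representation of the spherical integral, and the Mahler estimate $M(Q)\ge 4^{-k}\|Q\|_\infty$ for $\deg Q\le 2k$ is exactly right. The genuine gap is the ``remaining delicate task'': after Mahler you are left with $A_d:=-\int_{S^{d-1}}\ln|P|\,d\sigma\le k\ln 4+\mathbb{E}_{(v,\eta)}\bigl[-\ln\|f_{v,\eta}\|_\infty\bigr]$, and you need the last expectation to be at most $kH_{d-1}$. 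Neither of your suggestions is made concrete enough to see that it works. The naive way to bound $\|f_{v,\eta}\|_\infty$ from below is by $|P(v)|$ (or $|P(\eta)|$), but since $v$ is marginally uniform on $S^{d-1}$ this just returns $A_d$ on the right-hand side and gives the vacuous $A_d\le k\ln 4+A_d$; so a straightforward ``iteration'' of the same step is circular. A genuine dimension-reducing recursion would require passing from $S^{d-1}$ to a lower-dimensional sphere or ball with the correct measure (for instance via an Archimedes-type projection), and it is not at all clear from your sketch which slicing produces exactly the increment $k/j$ at step $j$ so that the harmonic sum $H_d$ emerges. Likewise, invoking ``a Remez-type inequality on $S^{d-1}$'' is precisely the content of the lemma you are trying to prove, so that alternative is not yet an argument. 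In short, the hard analytic step that encodes the $H_d$ is missing.
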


Then,  if we replace Lemma~\ref{lem8} with Lemma~\ref{lem8homog} in the proof of Theorem \ref{prop-cualquierespacio}, we obtain the proof of Proposition \ref{prop-lowerboundhilbert}.

\subsection{Proof of Proposition~\ref{prop-dim2}}

We may assume $P_1,\dots,P_n$ are norm one polynomials. By Lemma 3.3.6  from \cite{Ro2}, we know that
$$P_i = L_i\varphi_{i,1}\cdots \varphi_{i,k_i}\mbox{ for } i=1,\ldots,n,$$
where $\varphi_{i,j}$ are norm one linear functions and $$L_i= \frac{1}{\| \varphi_{i,1}\cdots \varphi_{i,k_i}\|}\ge 1.$$ Then, by definition of the $\kkk-$th linear polarization constan, we have

\begin{eqnarray}
\|P_1\cdots P_n\| &=&\left\Vert\prod_{i=1}^n \left( L_i \prod_{j=1}^{k_i} \varphi_{i,j}\right)\right\Vert  \nonumber \\
&=&  \left(\prod_{i=1}^n L_i\right) \left\Vert \prod_{i=1}^n \prod_{j=1}^{k_i} \varphi_{i,j}\right\Vert \nonumber \\
&\ge&  \left\Vert \prod_{i=1}^n \prod_{j=1}^{k_i} \varphi_{i,j} \right\Vert \nonumber \\
&\ge& \frac{1}{\ccc_{\kkk}(X)} \nonumber\
\end{eqnarray}
which ends the proof. \hfill \qed

\section{Proof of the plank type results}\label{proof plank}
\subsection{Proof of Theorems \ref{main thm} and \ref{main thm2}}
For the proof of Theorems~\ref{main thm} and~\ref{main thm2} we need the following two technical lemmas.

\begin{lem}\label{lem5}
Given $n$ positive integers $k_1,\ldots,k_n$, the set \begin{equation}\label{conjuntodenso}\left\{ \frac{1}{\sum_{i=1}^n k_ir_i}(k_1r_1,\ldots,k_nr_n): r_1,\dots,r_n\in \zN \right\}\end{equation} is dense in $\{x \in \zR^n : \sum_{i=1}^n x_i = 1, x_i\geq 0 \}$.
\end{lem}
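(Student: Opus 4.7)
The plan is to reduce the density statement to a cleaner one about normalized integer vectors, by exploiting the multiplicative structure of the map $(r_1,\ldots,r_n)\mapsto \frac{1}{\sum k_j r_j}(k_1 r_1,\ldots,k_n r_n)$. Specifically, I would observe that if $r_i = p_i \prod_{j\neq i} k_j$ for positive integers $p_1,\ldots,p_n$, then $k_i r_i = p_i\prod_{j} k_j$ and hence
\[
\frac{k_i r_i}{\sum_j k_j r_j} \;=\; \frac{p_i}{\sum_{j}p_j}.
\]
So it suffices to show that the set $\bigl\{(p_1/N,\ldots,p_n/N) : p_i\in\zN,\ N=\sum_j p_j\bigr\}$ is dense in the standard simplex $\Delta_n = \{x\in\zR^n:\sum x_i=1,\ x_i\geq 0\}$.

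For the density step I would argue directly. Given $(x_1,\ldots,x_n)\in\Delta_n$ and $\ve>0$, pick a large integer $N$ and set $p_i := \lfloor N x_i\rfloor +1$, which is a positive integer. Writing $N' := \sum_i p_i$, one has $N\leq N'\leq N+n$ and $|p_i - Nx_i|\leq 1$, so
\[
\Bigl|\frac{p_i}{N'} - x_i\Bigr| \;\leq\; \frac{|p_i - Nx_i|}{N'} + \frac{|N'-N|\, x_i}{N'} \;\leq\; \frac{1+n}{N},
\]
which is less than $\ve$ as soon as $N > (1+n)/\ve$. Defining $r_i := p_i\prod_{j\neq i}k_j$ then produces natural numbers whose associated point in \eqref{conjuntodenso} is within $\ve$ of the target, proving the lemma.

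The only subtle point is the boundary of $\Delta_n$: if some coordinate $x_i$ vanishes, one still needs $r_i\in\zN$ (i.e.\ $r_i\geq 1$), and this is precisely what the "$+1$" in the definition of $p_i$ guarantees. The corresponding ratio $p_i/N'\sim 1/N$ tends to $0$, so points on the boundary of the simplex are approximated by interior elements of the set \eqref{conjuntodenso}. I do not expect any real obstacle beyond this bookkeeping; the argument is purely elementary once one spots the factorization $r_i = p_i\prod_{j\neq i}k_j$ that makes the $k_i$'s disappear from the normalized vector.
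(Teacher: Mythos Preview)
Your proof is correct and follows essentially the same approach as the paper: both use the substitution $r_i = p_i\prod_{j\neq i}k_j$ (the paper writes $M=\prod_j k_j$ and chooses $r_i$ with $k_ir_i=q_iM$) to reduce to the density of normalized positive-integer vectors in the simplex. The only difference is cosmetic---the paper invokes density of the rationals while you give an explicit $\ve$-approximation with $p_i=\lfloor Nx_i\rfloor+1$, and you spell out the boundary case that the paper leaves implicit.
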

\begin{proof}
Take a set of rational numbers $t_1,\ldots,t_n\in \zQ,$ with $t_i>0$ ($i=1,\dots,n$) and \mbox{$\sum_{i=1}^n t_i =1$}. Write $t_i=\frac{q_i}{p}$ with $q_1,\ldots,q_n\in \zN$ and $p=\sum_{i=1}^n q_i$. Let $M=\prod_{i=1}^nk_i$ and take $r_i$ such that $k_ir_i=q_iM$. Then
$$\frac{1}{\sum_{i=1}^n k_ir_i}(k_1r_1,\ldots,k_nr_n)=\frac{1}{\sum_{i=1}^n q_iM}(q_1M,\ldots,q_nM)=\left(\frac{q_1}{p},\ldots,\frac{q_n}{p}\right).$$ Now, the density of rational numbers gives the desired result.
\end{proof}

\begin{lem}\label{lem4}
Given $b_1,\ldots,b_n \in \zR_{\geq 0}$, with $\sum_{i=1}^n b_i = \frac{1}{n^{n-1}}$, there is an element $(t_1,\ldots,t_n)\in \zR_{>0}^n$  such that
$$ \sum_{i=1}^n t_i = 1\quad\text{and}\quad
t_1^{t_1}\cdots t_n^{t_n} \geq b_i^{t_i} \mbox{ for } i=1,\ldots,n.$$
\end{lem}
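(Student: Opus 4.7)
My plan is to write down an explicit $t\in\mathbb{R}^n_{>0}$ that equalises all the quantities $b_i^{t_i}$, so that the $n$ inequalities in the conclusion collapse into a single one, which I will then extract from Gibbs' inequality. I write $\Phi(t):=t_1^{t_1}\cdots t_n^{t_n}$ and $H(t):=-\sum_j t_j\log t_j$. If some $b_i=0$ then $\Phi(t)\geq b_i^{t_i}=0$ is automatic for any $t_i>0$, so by assigning small positive weights to the zero indices and applying the main construction to the rest it suffices to treat the case $b_i>0$ for every $i$ (the case $n=1$ is trivial). Set $\ell_i=-\log b_i>0$, $L=\sum_j 1/\ell_j$, and
\[
t_i\;=\;\frac{1}{L\,\ell_i}.
\]
Then $\sum_i t_i=1$ and $b_i^{t_i}=e^{-\ell_i t_i}=e^{-1/L}$ is the \emph{same} value for every $i$, so the conclusion reduces to $\Phi(t)\geq e^{-1/L}$, equivalently $H(t)\leq 1/L$.

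The heart of the argument is the following auxiliary claim: for every $t\in\mathbb{R}^n_{>0}$ with $\sum t_i=1$,
\[
\sum_{i=1}^{n}\Phi(t)^{1/t_i}\;\geq\;\frac{1}{n^{n-1}}.
\]
I would prove this by applying Gibbs' inequality $-\sum_i t_i\log q_i\geq H(t)$ to the probability vector $q_i=\Phi(t)^{1/t_i}/Z$ with $Z=\sum_j\Phi(t)^{1/t_j}$: since $\log\Phi(t)^{1/t_i}=-H(t)/t_i$ and $\sum t_i=1$, the Gibbs bound collapses to $nH(t)+\log Z\geq H(t)$, so $\log Z\geq-(n-1)H(t)\geq-(n-1)\log n$, which is the claim.

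To finish, the hypothesis $\sum_i b_i=1/n^{n-1}$ reads $\sum_i e^{-1/(L t_i)}=1/n^{n-1}$ in the present notation, and if one had $H(t)>1/L$ then $H(t)/t_i>1/(L t_i)$ for every $i$, which would force
\[
\sum_i \Phi(t)^{1/t_i}\;=\;\sum_i e^{-H(t)/t_i}\;<\;\sum_i e^{-1/(L t_i)}\;=\;\tfrac{1}{n^{n-1}},
\]
contradicting the auxiliary claim. Hence $H(t)\leq 1/L$, as required. The hard part is recognising the auxiliary inequality and noticing that it is Gibbs' inequality in disguise (with $q$ chosen proportional to the very quantities being summed); the remaining manipulations are purely algebraic.
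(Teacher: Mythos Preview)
Your argument is correct. Both proofs make the same choice of candidate: up to normalisation, $t_i$ is proportional to $1/|\log b_i|$. Where they diverge is in the verification. The paper introduces the un-normalised $\tilde t_i=-c/\ln b_i$ with $c$ chosen so that $\prod_j \tilde t_j^{\tilde t_j}=e^{-c}$, then shows $\sum_j\tilde t_j\le 1$ by combining Jensen's inequality for the concave map $x\mapsto -1/\ln x$ on $(0,1/e^2)$ with Jensen for the convex map $x\mapsto x\ln x$; this concavity forces a case split ($n\le 2$ handled separately, $n\ge 3$ so that each $b_i<1/e^2$). Your route replaces all of that by the single auxiliary inequality $\sum_i\Phi(t)^{1/t_i}\ge n^{-(n-1)}$, which you extract neatly from Gibbs' inequality together with $H(t)\le \log n$, and then finish by contradiction. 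This is more conceptual, avoids the case split, and makes transparent \emph{why} the hypothesis $\sum b_i=n^{-(n-1)}$ is exactly the right threshold: it is the minimum of $\sum_i\Phi(t)^{1/t_i}$ over the simplex.

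One small point: your reduction to the case $b_i>0$ is correct but terse. When some $b_{i_0}=0$, running the main construction on the $m<n$ positive indices gives \emph{strict} inequality (since then $\sum_{i\in S}b_i=n^{-(n-1)}<m^{-(m-1)}$, so equality in the auxiliary claim is impossible), and this strictness is what allows the continuity argument when you perturb by assigning small positive weights to the zero indices. It would be worth saying this explicitly.
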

\begin{proof}
For $n=1$ we can take $t_1=1$ and for $n=2$ we can take $t_1=t_2=\frac 1 2$. Let us assume $n\geq 3$. For each $i$ let us define $\tilde{t}_i=-\frac{c}{\ln(b_i)}$, with $c>0$ such that
\begin{equation}\label{letra c} \prod_{j=1}^n \tilde{t}_j^{\tilde{t}_j}=e^{-c}.
\end{equation}
If we show $\sum_{j=1}^n \tilde{t}_j \leq 1$, and define $t_i=\dfrac{1}{\sum_{j=1}^n \tilde{t}_j} t_i$, then we have
$$\prod_{j=1}^{n} t_j^{t_j}= \frac{1}{\sum_{j=1}^n \tilde{t}_j} \left(\prod_{j=1}^{n} \tilde{t}_j^{\tilde{t}_j}\right)^{\frac{1}{\sum_{j=1}^n \tilde{t}_j}}\geq \prod_{j=1}^n \tilde{t}_j^{\tilde{t}_j}=e^{-c}=b_i^{\tilde{t}_i}\geq b_i^{t_i}.$$
Let us see this. Define $a_i=\frac{-1}{\ln(b_i)}$. Since each $b_i\leq \frac{1}{n^{n-1}}<\frac{1}{e^2}$ and the map $x\mapsto \frac{-1}{\ln(x)}$ is concave on $\left(0,\frac{1}{e^2}\right)$, by Jensen's inequality, we have
\begin{equation}\label{cota para a}\sum_{i=1}^n a_i \leq \frac{1}{\ln(n)}.
\end{equation}
From \eqref{letra c} we have
$$\ln(c)=\frac{-1-\sum_{i=1}^n a_i\ln(a_i)}{\sum_{i=1}^n a_i}.$$
Now, using that $x\mapsto x\ln(x)$ is convex on $\zR_{>0}$, Jensen's inequality and \eqref{cota para a} we obtain
\begin{eqnarray}
\frac{-1-\sum_{i=1}^n a_i\ln(a_i)}{\sum_{i=1}^n a_i} & \leq & \frac{-1 - \sum_{i=1}^n a_i(\ln(\sum_{i=1}^n a_i) - \ln(n))}{\sum_{i=1}^n a_i} \nonumber \\
&=&-\frac{1}{\sum_{i=1}^n a_i}-\ln\left(\sum_{i=1}^n a_i\right)+\ln(n) \nonumber \\
&\leq & -\frac{1}{1/\ln(n)}-\ln(1/\ln(n))+\ln(n) \label{increasing} \\
&=& \ln(\ln(n)), \nonumber \
\end{eqnarray}
where in \eqref{increasing} we use that the map $x\mapsto -\frac 1 x -\ln(x)$ is increasing on $(0,1)$. Then $c\leq \ln(n)$ and therefore
$$\sum_{j=1}^n \tilde{t}_j =\sum_{j=1}^n c a_j  \leq \ln(n) \sum_{j=1}^n  a_j \leq 1$$
which ends the proof.
\end{proof}

Note that the inequalities in Theorem~\ref{main thm} and \eqref{eq-BST} are exactly the ones in  Theorem~\ref{main thm2} and \eqref{eq-CPR} when we take $p=1$. With this in mind, it is easy to see that, if we put $p=1$ in the proof of Theorem~\ref{main thm2} below, we obtain the proof of  Theorem~\ref{main thm}.

\begin{proof}[Proof of  Theorem  \ref{main thm2}]
Choose $b_i>a_i^p$, $i=1,\dots,n$, such that $\sum_{i=1}^n b_i =\frac{1}{n^{n-1}}$.
By Lemma~\ref{lem4}, we can take  an element $(t_1,\ldots,t_n)\in \zR_{>0}^n $ with $\sum_{i=1}^n t_i=1$ and
$$t_1^{t_1}\cdots t_n^{t_n} \geq b_i^{t_i} \, \mbox{ for } \, i=1,\ldots,n.$$

We claim that there is $\delta>0$  such that for any positive integer $N$, we can choose $r=(r_1,\ldots,r_n)\in \zN^n$ so that, if we call $s_i = \frac{k_ir_i}{\sum_{j=1}^n k_jr_j}$ , we have
$$s_i\geq \delta\quad \text{and}\quad s_1^{s_1}\cdots s_n^{s_n} \geq b_i^{s_i} \left(1-\frac{1}{N}\right) \quad \mbox{ for } i=1,\ldots,n.$$
In the notation we omit the dependence on $N$ for simplicity. Indeed, using the convention that the function $t\mapsto t^t$ is one at $t=0$, we have the finite family of continuous functions $x\mapsto x_1^{x_1}\cdots x_n^{x_n}$ and $x\mapsto b_i^{x_i}$, $i=1,\dots,n$, defined on the compact set $\{x \in \zR^n : \sum_{i=1}^n x_i = 1, x_i\geq 0 \}$. Then, the density of the set \eqref{conjuntodenso} in this compact set is just what we need for finding $r$ such that $s_1^{s_1}\cdots s_n^{s_n} \geq b_i^{s_i} \left(1-\frac{1}{N}\right)$. Since we have to take $s_i$ close enough to $t_i$ we also may assume $s_i \geq \frac{t_i}{2}$. Then, we define $\delta :=\min\left\{\frac{t_i}{2}:i=1,\ldots,n\right\}$.

On the other hand, by \eqref{eq-CPR}, we have
$$\left\Vert \prod_{i=1}^n P_i^{r_i} \right\Vert \geq \left(\frac{\prod_{i=1}^n (k_ir_i)^{k_ir_i} }{(\sum_{i=1}^n k_ir_i )^{\sum_{i=1}^n k_ir_i}}\right)^{\frac 1 p}.$$
So, for all $N \in \zN$, we can take $\textbf{z}_N \in B_{X}$ such that
$$| \big(P_1(\textbf{z}_N)\big)^{r_1}\cdots \big(P_n(\textbf{z}_N)\big)^{r_n}|\geq \left(\frac{\prod_{i=1}^n (k_ir_i)^{k_ir_i} }{(\sum_{i=1}^n k_ir_i )^{\sum_{i=1}^n k_ir_i}}\right)^{\frac 1 p }\left( 1 - \frac 1 N \right).$$
Since each polynomial has norm one, this gives for each $i=1,\dots,n$:
$$|\big(P_i(\textbf{z}_N)\big)^{r_i}| \geq  \left(\frac{\prod_{j=1}^n (k_jr_j)^{k_jr_j} }{(\sum_{j=1}^n k_jr_j )^{\sum_{j=1}^n k_jr_j}}\right)^{\frac 1 p }\left( 1 - \frac 1 N \right).$$
Therefore,
\begin{eqnarray}
|P_i(\textbf{z}_N)| &\geq& \left(\frac{\prod_{j=1}^n (k_jr_j)^{k_jr_j} }{(\sum_{j=1}^n k_jr_j )^{\sum_{j=1}^n k_jr_j}}\right)^{\frac{k_i}{k_ir_ip}} \left( 1 - \frac 1 N \right)^{\frac{1}{r_i}}\nonumber \\
&=& \left(s_1^{s_1}\cdots s_n^{s_n}\right)^{\frac{ k_i }{s_i p}} \left( 1 - \frac 1 N \right)^{\frac{1}{r_i}} \nonumber \\
&\geq& \left(b_i^{s_i} \left(1-\frac{1}{N}\right) \right)^{\frac{ k_i }{s_i p}} \left( 1 - \frac 1 N \right) \nonumber \\
&=& b_i^{\frac{k_i }{p}} \left(1-\frac{1}{N}\right)^{\frac{ k_i }{s_i p} +1} \nonumber \\
&\geq& b_i^{\frac{k_i }{p}} \left(1-\frac{1}{N}\right)^{\frac{ k_i }{\delta p}+1}. \label{ultimopaso} \
\end{eqnarray}

But recall that $b_i>a_i^p$, $i=1,\dots,n$. Since $\delta$  does not depend on $N$, we can take $N$ large enough such that
$$b_i^{\frac{k_i }{p}} \left(1-\frac{1}{N}\right)^{\frac{ k_i }{\delta p}+1} \geq a_i^{k_i }, $$
which ends the proof of the general case.

For the finite dimensional case, we need to deal with the case $\sum_{i=1}^n a_i^p = \frac{1}{n^{n-1}}$. For this, we take $b_i=a_i^p$ and proceed as in the proof of the general case up to \eqref{ultimopaso}. We can take, by the finite dimension of our space, a limit point $\textbf{z}_0\in B_{X}$  of the sequence $\{\textbf{z}_N\}_{N \in \zN}$. Then, by continuity, we have
$$|P_i(\textbf{z}_0)|\geq b_i^{\frac{k_i }{p}}=a_i^{k_i },$$
as desired.
\end{proof}

\subsection{Proof of Proposition \ref{main thm3}}

The proof of Proposition~\ref{main thm3} is analogous to the proof of Theorems~\ref{main thm} and \ref{main thm2}, replacing Lemma~\ref{lem4} with the following lemma.

\begin{lem}\label{lem7}
Let $n$ be a natural number and $K\in \left(0,\frac{1}{\sqrt[n]{ne^2}}\right]$. Given non negative numbers $b_1,\ldots,b_n$, with $\sum_{i=1}^n b_i = nK^n$, there is an element $(t_1,\ldots,t_n)\in \zR_{>0}^n$  such that
$$ \sum_{i=1}^n t_i = 1\quad\text{and}\quad
K^{\frac{1}{t_i}} \geq b_i \mbox{ for } i=1,\ldots,n.$$
\end{lem}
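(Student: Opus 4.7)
The plan is to mimic the proof of Lemma~\ref{lem4}, constructing candidate weights $\tilde t_i$ that saturate the required inequalities and then renormalizing. The inequality $K^{1/t_i}\geq b_i$, after taking logarithms, becomes $(\ln K)/t_i \geq \ln b_i$; since $0<K<1$ and $0\leq b_i<1$ (as $b_i\leq nK^n\leq 1/e^2$ under the hypothesis), both logarithms are non-positive and the condition is equivalent to $t_i \geq (\ln K)/(\ln b_i)$ when $b_i>0$, and is automatic when $b_i=0$. This suggests defining $\tilde t_i = (\ln K)/(\ln b_i)$ for indices with $b_i>0$, chosen so that $K^{1/\tilde t_i}=b_i$ holds with equality.

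The key step is to show $\sum_i \tilde t_i \leq 1$. Writing $\tilde t_i = (-\ln K)\, f(b_i)$ with $f(x)=-1/\ln x$, this amounts to $\sum_i f(b_i) \leq 1/(-\ln K)$. The hypothesis $K\leq 1/\sqrt[n]{ne^2}$ is tuned precisely so that each $b_i\leq \sum_j b_j = nK^n\leq 1/e^2$, placing every $b_i$ inside the interval $(0,1/e^2]$ on which $f$ is concave (the same concavity already exploited in Lemma~\ref{lem4}). Jensen's inequality then yields
\[
\sum_{i=1}^n f(b_i) \;\leq\; n\, f\!\left(\frac{1}{n}\sum_{j=1}^n b_j\right) \;=\; n\, f(K^n) \;=\; \frac{1}{-\ln K},
\]
which is exactly what is needed.

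To finish, I would normalize: let $S=\sum_j \tilde t_j \leq 1$ and set $t_i = \tilde t_i/S$. Then $\sum_i t_i=1$ and $t_i\geq \tilde t_i$, hence (since $t\mapsto K^{1/t}$ is increasing for $0<K<1$) $K^{1/t_i}\geq K^{1/\tilde t_i}=b_i$. If some $b_i$ vanish, I would handle them by a small perturbation $b_i\mapsto b_i+\varepsilon$ and pass to the limit $\varepsilon\to 0^+$, or equivalently reserve a tiny portion of the available slack $1-S$ for these indices; this makes every $t_i$ strictly positive without disturbing the inequalities $K^{1/t_j}\geq b_j$ at the other indices, where the condition $K^{1/t_i}\geq 0$ is vacuous.

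The main obstacle is verifying the concavity window: one must check both that $f(x)=-1/\ln x$ is concave on $(0,1/e^2)$ and that the threshold $K\leq 1/\sqrt[n]{ne^2}$ forces every $b_i$ into it. Equality in Jensen is attained precisely when all $b_i$ coincide with $K^n$, which matches the equality case of the lemma and confirms that the bound on $K$ cannot be weakened by this approach.
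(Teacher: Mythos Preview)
Your proof is correct and follows essentially the same approach as the paper's: define $s_i=\ln K/\ln b_i$ for the positive $b_i$, apply Jensen to the function $x\mapsto -1/\ln x$ (concave on $(0,1/e^2]$) to obtain $\sum_i s_i\le 1$, and then increase the $s_i$ to probability weights $t_i$. The only cosmetic difference is the treatment of indices with $b_i=0$: the paper sets $s_i=0$ there and observes that Jensen is \emph{strict} (since the $b_i$ are not all equal), which creates the slack needed to make every $t_i$ strictly positive; your perturbation argument achieves the same end.
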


\begin{proof}
Let us first assume $b_1,\ldots,b_n$ are strictly positive and define $s_i:= \frac{\ln(K)}{\ln(b_i)}$, or equi\-va\-lently  $b_i=K^{\frac{1}{s_i}}$. If we show that $\sum_{j=1}^ns_j\leq 1$, then  we can take $t_i\geq s_i$ such that $\sum_{j=1}^nt_j =1$ and, since $x \mapsto K^{\frac{1}{s}}$ is increasing, we have
$$ K^{\frac{1}{t_i}} \geq K^{\frac{1}{s_i}} = b_i.$$

Let us see then, that $\sum_{j=1}^ns_j\leq 1$. The condition  $\sum_{j=1}^n b_j = nK^n$ implies $b_i\leq\frac{1}{e^2}$ for each $i=1,\ldots,n$. Since the function $x\mapsto\frac{\ln(K)}{\ln(x)}$ is concave on $\left[0,\frac{1}{e^2}\right]$, using Jensen's inequality we have
$$\sum_{j=1}^n s_j =\sum_{j=1}^n \frac{\ln(K)}{\ln(b_j)} \leq n \frac{\ln(K)}{\ln\left(\frac{\sum_{j=1}^n b_j}{n}\right)}=1.$$

If $b_{i_0}=0$ for some $i_0$, we define $s_i=0$ whenever $b_i=0$ and $s_i:= \frac{\ln(K)}{\ln(b_i)}$ otherwise.
Since in this case we do not have $b_1=b_2=\ldots =b_n$, proceeding as in the previous case we obtain
$$\sum_{j=1}^n s_j<1.$$
This allow us to take each $t_i$ strictly greater than $s_i$ (and, in particular, strictly positive as desired), satisfying $\sum_{j=1}^nt_j =1$. We go on as above to obtain the result.
\end{proof}



\section*{Acknowledgements}
This project was supported in part by UBACyT 20020130100474BA, PIP  11220130100329CO (CONICET), PICT 2011-1456 and PICT-2015-2299.

\end{document}